\documentclass{amsart}

\usepackage[left=1.2in,right=1in,top=1in,bottom=.8in]{geometry}

\usepackage[utf8]{inputenc}
\usepackage[english]{babel}

\usepackage{graphicx}
\graphicspath{ {./images/} }
\usepackage{amsmath, lipsum}
\usepackage{amssymb}
\usepackage{amsthm}
\usepackage{framed}
\usepackage{mathtools}
\usepackage[colorlinks]{hyperref}
\usepackage{enumerate}
\usepackage{enumitem}
\usepackage{booktabs}
\usepackage{array}
\usepackage{faktor}
\usepackage{cancel}
\usepackage{makecell}
\usepackage{pbox}
\usepackage{bm}
\usepackage[toc,page]{appendix}
\usepackage{etoolbox}
\usepackage{stmaryrd}
\usepackage{xstring}
\usepackage{extarrows}
\usepackage{blindtext}
\usepackage{comment}
\usepackage{centernot}
\usepackage{floatrow}
\usepackage{leftindex}
\allowdisplaybreaks[1]
\usepackage[dvipsnames]{xcolor}

\usepackage{tikz}
\usepackage{tikz-cd}
\usepackage{quiver}
\usetikzlibrary{arrows.meta}
\usepackage[all,cmtip]{xy}
\usetikzlibrary{patterns}
\usetikzlibrary{cd}
\usetikzlibrary{calc}

\newcommand{\da}{\partial}

\newcommand{\rr}{\rightrightarrows}

\newcommand{\tg}{{\rm t}}
\newcommand{\m}        {\rm m}
\newcommand{\iv}        {\rm i}
\newcommand{\s}{{\rm s}}
\newcommand{\inv}{^{-1}}
\newcommand{\an}[1]{\arrowvert_{#1}}

\newcommand{\lrtimes}[2]{\leftindex[I]_{#1}\hspace{-0.2mm}\times_{#2}}

\renewcommand{\tilde}[1]{\widetilde{#1}}

\newcommand\arrowsquare{} 
\def\arrowsquare[#1,#2,#3,#4,#5,#6]{
	\pgfmathsetmacro{\xleft}{#1}
	\pgfmathsetmacro{\xmid}{#1 + 0.5 * #3}
	\pgfmathsetmacro{\xright}{#1 + #3}
	\pgfmathsetmacro{\ytop}{#2}
	\pgfmathsetmacro{\ymid}{#2 + 0.5 * #4}
	\pgfmathsetmacro{\ybot}{#2 + #4}
	\IfSubStr{#6}{t}{%
		\draw[-] (\xright,\ytop) to (\xleft,\ytop); 
	}{}
	\IfSubStr{#6}{l}{%
		\draw[-] (\xleft,\ybot) to (\xleft,\ytop); 
	}{}
	\IfSubStr{#6}{b}{%
		\draw[-] (\xright,\ybot) to (\xleft,\ybot); 
	}{}
	\IfSubStr{#6}{r}{%
		\draw[-] (\xright,\ybot) to (\xright,\ytop); 
	}{}
	\node at (\xmid, \ymid) {\scriptsize #5};
}

\newcommand\arrowgrid{} 
\def\arrowgrid[#1,#2,#3,#4,#5,#6,#7]{%
	\foreach \i in {1,...,#4}
	{
		\pgfmathsetmacro{\ytop}{#2 + \i - 1}
		\pgfmathsetmacro{\ymid}{#2 + \i - 0.5}
		\pgfmathsetmacro{\ybot}{#2 + \i}

		\IfInteger{#5}{%
			\pgfmathtruncatemacro{\iindexcalc}{#5 + \i - 1}%
			\providecommand{\iindex}{\iindexcalc}%
		}{%
			\providecommand{\iindex}{#5}%
		}

		\foreach \j in {1,...,#3}
		{
			\pgfmathsetmacro{\xleft}{#1 + \j - 1}
			\pgfmathsetmacro{\xmid}{#1 + \j - 0.5}
			\pgfmathsetmacro{\xright}{#1 + \j}

			\IfInteger{#6}{%
				\pgfmathtruncatemacro{\jindexcalc}{#6 + \j - 1}%
				\providecommand{\jindex}{\jindexcalc}%
			}{%
				\providecommand{\jindex}{#6}%
			}

			\draw[-] (\xleft,\ybot) to (\xleft,\ytop);
			\draw[-] (\xright,\ytop) to (\xleft,\ytop);
			\node at (\xmid,\ymid) {\scriptsize $#7_{\iindex,\jindex}$}; 
			\ifthenelse{\i=#4}{
				\draw[-] (\xright,\ybot) to (\xleft,\ybot);
			}{}
		}
		\pgfmathtruncatemacro{\xlast}{#1 + #3}
		\draw[-] (\xlast,\ybot) to (\xlast,\ytop);
	}
}

\DeclareMathOperator{\id}{id}

\theoremstyle{plain}
\newtheorem{thm}{Theorem}[section]

\theoremstyle{definition}

\newtheorem{example}[thm]{Example}

\newtheorem{proposition}[thm]{Proposition}

\title{Three explicit examples of Lie $2$-groupoids}
\author{M.~Jotz, L.~Mu\ss m\"acher}
\keywords{Lie 2-groupoids, bisimplicial complexes}
\subjclass{Primary: 
Secondary: 
}

 \email{madeleine.jotz@uni-wuerzburg.de}
\email{linus.mussmaecher@jku.at}

\thanks{}

\thanks{$^*$Corresponding author}

\begin{document}

\begin{abstract}
	This note constructs three explicit classes of Lie $2$-groupoids arising from double Lie groupoids via the bar construction of Artin and Mazur \cite{ArMa66,MeTa11}, following a result of Mehta and Tang \cite{MeTa11}: double Lie groupoids give rise to bisimplicial manifolds whose truncations define Lie $2$-groupoids. This construction is carried out for comma double Lie groupoids \cite{BrMa92}, transitive double Lie groupoids \cite{BrMa92,JoMa21}, and split VB-groupoids \cite{Mackenzie92} described by $2$-term representations up to homotopy \cite{GrMe17}.
	\end{abstract}

\maketitle

\bigskip

	\tableofcontents
	
	\section{Introduction}

Lie groupoids  play a central role in areas such as foliation theory, Poisson geometry, and the theory of differentiable stacks. In recent years, increasing attention has been devoted to \emph{higher} Lie groupoids, which encode symmetries not only between objects but also between morphisms, see e.g.~\cite{Henriques08,Getzler09,Zhu09,TsZh06,BaLa04}. Among these, Lie $2$-groupoids form the first genuinely higher level: they provide geometric models for categorified symmetries.

While the abstract theory of higher Lie groupoids is well developed, the range of explicit geometric constructions remains limited.  The general Lie $2$-groupoid of a $2$-term complex can be found in \cite{HoSt19}, the Lie $2$-groupoid integrating an exact Courant algebroid in \cite{ShZh17,LiSe12,Severa17}, and explicit examples of Lie $2$-groups \cite{BaCr04} are constructed in \cite{GiXu09,BaStCrSc07}.
Simple and concrete examples are particularly valuable because they allow one to test general constructions—most notably differentiation procedures that associate higher Lie algebroids to higher Lie groupoids \cite{LiRyWeZh26,CaHo26}, in analogy with the classical passage from Lie groups to Lie algebras. Carrying out such differentiation methods in specific geometric situations in which the infinitesimal version is already known will help clarify the structures that appear and the role played by the higher simplicial geometry.

The aim of this note is to produce explicit families of Lie $2$-groupoids starting from familiar geometric structures. The construction is based on the observation \cite{MeTa11} that double Lie groupoids naturally give rise to bisimplicial manifolds and so to simplicial manifolds via the bar construction introduced by Artin and Mazur \cite{ArMa66}. The obtained simplicial manifolds are then Lie $2$-groupoids \cite{MeTa11}. Explicitly illustrating this relationship by focusing on simple examples helps clarify how familiar geometric structures can give rise to higher groupoid geometry.

This paper carries out this construction explicitly for three natural classes of double Lie groupoids. The first class consists of comma double Lie groupoids, introduced in work of Brown and Mackenzie \cite{BrMa92}. These double groupoids arise canonically from morphisms of Lie groupoids and encode the geometry of the associated comma constructions. The second class is that of transitive double Lie groupoids, studied by Brown and Mackenzie \cite{BrMa92} and later refined by Jotz and Mackenzie \cite{JoMa21}. The third class arises from VB-groupoids \cite{Pradines88, Mackenzie92}, viewed as special examples of double Lie groupoids two parallel sides of which are given by vector bundles.

In the split case, VB-groupoids are described by $2$-term representations up to homotopy of Lie groupoids, following the celebrated result of Gracia-Saz and Mehta \cite{GrMe17} -- see also \cite{ArCr13} for the notion of representations up to homotopy of Lie groupoids. Interpreting these structures as special double Lie groupoids allows the same bar construction to produce Lie $2$-groupoids encoding the corresponding representation-theoretic data. The same construction applied to VB-groupoids with trivial side or core yields Lie $1$-groupoids, which are defined as the semi-direct product of the Lie groupoid side with a representation. For this reason, the general construction presented here is considered the semi-direct product of a Lie groupoid with a $2$-term representation up to homotopy.

The goal of this paper is not to develop new abstract theory but rather to make these three constructions completely explicit and ready to be used in further work. For each of the three classes, the resulting simplicial manifolds are described and the surmersive but generally not diffeomorphic $2$-horn maps are given explicitly. These examples are intended as concrete models that can be used to explore differentiation procedures for higher Lie groupoids \cite{LiRyWeZh26,CaHo26} and to better understand the geometry underlying such constructions.

Comma double Lie groupoids \cite{BrMa92} are known to differentiate to comma double Lie algebroids \cite{JoMa21}, which are canonically associated to Lie $2$-algebroids that split in a double complex \cite{MePi21}. Similarly, transitive double Lie groupoids \cite{BrMa92} differentiate to transitive double Lie algebroids \cite{JoMa21} and so to \emph{transitive Lie $2$-algebroids} -- these objects are described explicitly by the first author in a work in progress. Finally, VB-groupoids differentiate to VB-algebroids \cite{Pradines88,Mackenzie05,GrMe17}, which define Lie $2$-algebroids as semi-direct products of Lie algebroids with $2$-term representations up to homotopy \cite{ShZh17,Jotz19b}. In all three cases, the bar construction yields at the global level a class of Lie $2$-groupoids, which has to differentiate to the class of Lie $2$-algebroids defined by the infinitesimal level. This will be described in a subsequent work.

  \subsection*{Outline of the paper}
 The paper is organized as follows. After a summary of the necessary background on bisimplicial manifolds, double Lie groupoids and Mehta and Tang's application of the bar construction to these objects \cite{MeTa11}, the three classes of examples are discussed. Section 3 focuses on comma double Lie groupoids, Section 4 on transitive double Lie groupoids, and Section 5 on split VB-groupoids and $2$-term representations up to homotopy.

 \subsection*{Acknowledgement}
The authors warmly thank Chenchang Zhu for interesting discussions, and Karandeep Jandu Singh for his careful reading of the master's thesis of the second author, from which this paper was derived.

	\section{Preliminaries}\label{prelim}
	This section recalls necessary background on Lie $n$-groupoids, double Lie groupoids and the bar construction.
	\subsection{Simplicial manifolds and Lie $n$-groupoids}
	A \textbf{simplicial manifold} $X_\bullet$ consists of
	 a sequence $X_{\bullet} = \{X_r\}_{r \geq 0}$ of smooth manifolds $X_r$,
		smooth surjective submersions $f_i^r\colon X_r \to X_{r-1}$ (for $r\geq 1$) called \textbf{face maps} or simply \textbf{faces} for all $0 \leq i \leq r$,
		 embeddings $\delta_i^r\colon X_r \to X_{r+1}$ (for $r\geq 0$) called \textbf{degeneracy maps}
		for all $0 \leq i \leq r$,
	satisfying the following equations:
	\begin{align*}
		f^{r-1}_i f^r_j &= f^{r-1}_{j-1} f^r_i
		&&\quad \text{ for all } 0 \leq i < j \leq r \\
		\delta^{r+1}_i \delta^r_j &= \delta^{r+1}_{j+1} \delta^r_i
		&&\quad \text{ for all } 0 \leq i \leq j \leq r \\
		f^{r+1}_i \delta^r_j &= \delta^{r-1}_{j-1} f^r_i
		&&\quad \text{ for all } 0 \leq i < j \leq r \\
		f^{r+1}_i \delta^r_j &= \id
		&&\quad \text{ for } i = j \text{ or } i = j+1 \\
		f^{r+1}_i \delta^r_j &= \delta^{r-1}_{j} f^r_{i-1}
		&&\quad \text{ for all } 0 \leq j + 1 < i \leq r \text{.}
	\end{align*}

	Let $X_\bullet$ and $Y_\bullet$ be two simplicial manifolds. A collection $\Phi_\bullet=(\Phi_r)_{r\geq 0}$ of smooth maps $\Phi_r\colon X_r\to Y_r$ for all $r\geq 0$ is a \textbf{morphism of simplicial manifolds} if 
\[\begin{tikzcd}
	{X_{r+1}} && {Y_{r+1}} && {X_r} && {Y_r} \\
	{X_r} && {Y_r} && {X_{r+1}} && {Y_{r+1}}
	\arrow["{\Phi_{r+1}}", from=1-1, to=1-3]
	\arrow["{f^{r+1}_i}"', from=1-1, to=2-1]
	\arrow["{f^{r+1}_i}", from=1-3, to=2-3]
	\arrow["{\Phi_r}", from=1-5, to=1-7]
	\arrow["{\delta^r_i}"', from=1-5, to=2-5]
	\arrow["{\delta^r_i}", from=1-7, to=2-7]
	\arrow["{\Phi_r}"', from=2-1, to=2-3]
	\arrow["{\Phi_{r+1}}"', from=2-5, to=2-7]
\end{tikzcd}\]
commute for all $r\geq 0$ and all applicable $i$ such that the maps are defined.
\medskip

 For $q\geq 1$ and $0\leq k\leq q$, a \textbf{$(q,k)$-horn} of $X$ is a $q$-tuple 
$(x_0, \ldots, x_{k-1}, x_{k+1}, \ldots, x_q)\in (X_{q-1})^q$
such that 
$f_i^{q-1}(x_j)=f_{j-1}^{q-1}(x_i)$
for $0\leq i<j\leq q$, $i\neq k\neq j$.
 For $q\geq 1$ and $0\leq k\leq q$ the smooth manifold of $(q,k)$-horns is written $\wedge_{q,k}X$ and the $k$-th \textbf{horn map} of the $q$-th layer is
	\begin{equation*}
		\lambda_{q,k}\colon X_q \to \wedge_{q,k}X,\qquad  x \mapsto (f_0^q(x), \dots, f_{k-1}^q(x), f_{k+1}^q(x), \dots, f_q^q(x))\text{.}
	\end{equation*}
 The simplicial manifold $X_\bullet$ is called a \textbf{Lie $n$-groupoid} if the horn maps $\lambda_{q,k}$ are surjective submersions at every level and diffeomorphisms for levels $q > n$.
		See \cite{Henriques08, Zhu09}
		 or \cite[Section 4.1]{MeTa11}. A morphism of simplicial manifolds between Lie $n$-groupoids is called a \textbf{morphism of Lie $n$-groupoids}.

\subsection{Bisimplicial manifolds}

The bar construction is applied to \textbf{bisimplicial manifolds}.
	A \textbf{bisimplicial manifold} $X_{\bullet, \bullet}$ consists of 
	\begin{itemize}
		\item a two-parameter sequence $X_{\bullet, \bullet} = \{X_{p,q}\}_{p,q \geq 0}$ of smooth manifolds $X_{p,q}$,
		\item smooth surjective submersions $v^{p,q}_i\colon X_{p,q} \to X_{p,q-1}$ called \textbf{vertical face maps} for all $0 \leq i \leq q$ and all $p \geq 0, q > 0$,
		\item smooth surjective submersions $h^{p,q}_i\colon X_{p,q} \to X_{p-1,q}$ called \textbf{horizontal face maps} for all $0 \leq i \leq p$ and all $p >0, q \geq 0$,
		\item embeddings $\mu^{p,q}_i\colon X_{p,q} \to X_{p,q+1}$ called \textbf{vertical degeneracy maps} for all $0 \leq i \leq q$ and all $p \geq 0, q \geq 0$,
		\item embeddings $\eta^{p,q}_i\colon X_{p,q} \to X_{p+1,q}$ called \textbf{horizontal degeneracy maps} for all $0 \leq i \leq p$ and all $p \geq 0, q \geq 0$,
	\end{itemize}
	such that the following holds:
	\begin{enumerate}
		\item\label{item:bisimplicial-cond1}
		For each fixed $p \geq 0$, the column $X_{p, \bullet}$ with faces $v_i^{p,q}\colon X_{p,q}\to X_{p,q-1}$  for $q\geq 1$ and $0 \leq i \leq q$ and degeneracies $\mu_i^{p,q}\colon X_{p,q}\to X_{p,q+1}$ for $q\geq 0$ and $0 \leq i \leq q$ is a simplicial manifold.
		\item\label{item:bisimplicial-cond2}
		For each fixed $q \geq 0$, the row $X_{\bullet, q}$ with faces $h_i^{p,q}\colon X_{p,q}\to X_{p-1,q}$ for $p\geq 1$ and $0 \leq i \leq p$ and degeneracies $\eta_i^{p,q}\colon X_{p,q}\to X_{p+1,q}$ for $p\geq 0$ and $0\leq i \leq p$ is a simplicial manifold.
		\item\label{item:bisimplicial-cond3}
		The horizontal and vertical structure maps interchange with each other as follows: 
		\begin{align*}
			v^{p-1,q}_j h^{p,q}_i &= h^{p,q-1}_i v^{p,q}_j
			&&\quad \text{ for all } p > 0,q > 0\\
			\mu^{p+1,q}_j \eta^{p,q}_i &= \eta^{p,q+1}_i \mu^{p,q}_j
			&&\quad \text{ for all } p\geq 0,q \geq 0\\
			v^{p+1,q}_j \eta^{p,q}_i &= \eta^{p,q-1}_i v^{p,q}_j
			&&\quad \text{ for all } p \geq 0, q > 0\\
			\mu^{p-1,q}_j h^{p,q}_i &= h^{p,q+1}_i \mu^{p,q}_j
			&&\quad \text{ for all } p > 0, q \geq 0
		\end{align*}
		and for all $0 \leq j \leq q$ and  $0 \leq i \leq p$.
	\end{enumerate}

\subsection{Double Lie groupoids and their bisimplicial nerves}
A double groupoid is a \emph{groupoid object in the category of
  groupoids} (see \cite{BrMa92} and historical remarks therein).  That is, a double groupoid consists of a set $\Gamma$
that has two groupoid structures over two bases $G$ and $H$, which are
themselves groupoids over a base $M$, such that the structure maps of
each groupoid structure on $\Gamma$ are morphisms with respect to the
other. 
\begin{equation*}
		\begin{tikzcd}
	        \Gamma
			\arrow[shift left]{r}{s_G}
			\arrow[shift right, swap]{r}{t_G}
			\arrow[shift left]{d}{s_H}
			\arrow[shift right, swap]{d}{t_H}
			& G
			\arrow[shift left]{d}{s}
			\arrow[shift right, swap]{d}{t}
			\\ H
			\arrow[shift right, swap]{r}{t}
			\arrow[shift left]{r}{s}
			& M
		\end{tikzcd}
	\end{equation*}
The groupoids $G$ and $H$ are the \textbf{side groupoids} and
the set $M$ is the \textbf{double base}. The \textbf{core} $K$ of a double groupoid
$(\Gamma,G,H,M)$ is the set of elements which project under the two
sources of $\Gamma$ to units of $G$ and $H$. It inherits from the
double structure of $\Gamma$ a groupoid structure over $M$, and the
two targets of $\Gamma$ induce two groupoid morphisms
$\partial_G:=\tg_G\arrowvert_K\colon K\to G$ and $\partial_H:=\tg_H\arrowvert_K\colon K\to H$
over the identity on $M$. Elements of the kernel of the morphism
$K\to G$ commute with elements of the kernel of $K\to H$. These two
groupoid morphisms together constitute the \textbf{core diagram} of the
double groupoid \cite{BrMa92}.
   \begin{equation}\label{trans_core_d}
   \begin{tikzcd}
	K & G \\
	H & M
	\arrow[shift left=1, from=1-2, to=2-2]
	\arrow[shift right=1, from=1-2, to=2-2]
	\arrow[shift right=1, from=2-1, to=2-2]
	\arrow[shift left=1, from=2-1, to=2-2]
	\arrow[shift right=1, from=1-1, to=2-2]
	\arrow[shift left=1, from=1-1, to=2-2]
	\arrow["{\partial_G}", from=1-1, to=1-2]
	\arrow["{\partial_H}"', from=1-1, to=2-1]
      \end{tikzcd}\end{equation}

    A \textbf{double Lie groupoid}  is a double groupoid
    $(\Gamma, G, H, M)$ such that all four groupoids $\Gamma\rr G$,
    $\Gamma\rr H$, $G\rr M$ and $H\rr M$ are Lie groupoids and such
    that the double source map
    $(\s_G, \s_H)\colon \Gamma\to G\times_\s H=\{(g,h)\in G\times
    H\mid \s(g)=\s(h)\}$ is a smooth surjective submersion\footnote{Some sources do not require this last condition and consequently call double Lie groupoids satisfying it \emph{strict} or \emph{full}.}. In that
    case, the core diagram is a core diagram of Lie groupoids.  
    
    A
    double Lie groupoid 
    is
    \textbf{transitive} if $\da_G$ and $\da_H$ are both surjective
    submersions and it is \textbf{locally trivial} if $K\rr M$ is in addition locally trivial.  
   Brown and Mackenzie proved in \cite{BrMa92} that
    locally trivial double Lie groupoids are completely determined by
    their core diagrams, and \cite{JoMa21} extended this to the general transitive case.
    
The core diagram of a transitive double Lie groupoid is 
  also \textbf{transitive}. Generally, a diagram of Lie groupoid morphisms as in \eqref{trans_core_d} such that the subgroupoids $\ker(\da_G)$ and $\ker(\da_H)$
  commute in $K$ is called a \textbf{core diagram} and it is \textbf{transitive} if both Lie groupoid morphisms are surjective submersions.

\bigskip 

		The symbol $\cdot$ is used for both the multiplication in $G$ and $H$, but those in $\Gamma$ are written $\cdot_G$ and $\cdot_H$. Similarly, the unit maps originating from $M$ are written $1^H: M \to H$ and $1^G: M \to G$ while the unit maps $\tilde{1}\colon G \to \Gamma$ and $\tilde{1}\colon H \to \Gamma$ are specified by their argument.
	
	As in \cite{MeTa11} and as originally suggested by Mackenzie, elements $\gamma \in \Gamma$ are squares
		\begin{equation*}
			\begin{tikzpicture}[scale=1.8]
				\coordinate (A) at (0,0) ;
				\coordinate (B) at (0,1) ;
				\coordinate (C) at (1,0) ;
				\coordinate (D) at (1,1) ;

				\fill (A) circle (0.2mm);
				\fill (B) circle (0.2mm);
				\fill (C) circle (0.2mm);
				\fill (D) circle (0.2mm);

				\node[below=1mm, left, ] (An) at (A) {$M \ni (t \circ s_G)(\gamma)$};
				\node[above=1mm, left, ] (Bn) at (B) {$M \ni (t \circ t_G)(\gamma)$};
				\node[below=1mm, right,] (Cn) at (C) {$(s \circ s_G)(\gamma) \in M$};
				\node[above=1mm, right,] (Dn) at (D) {$(s \circ t_G)(\gamma) \in M$};

				\draw[->] (A) to node[left ]{$H \ni t_H(\gamma)$} (B);
				\draw[->] (C) to node[right]{$s_H(\gamma) \in H$} (D);
				\draw[->] (C) to node[below]{$s_G(\gamma) \in G$} (A);
				\draw[->] (D) to node[above]{$t_G(\gamma) \in G$} (B);

				\node at (0.5,0.5) {$\gamma$};
			\end{tikzpicture}
		\end{equation*}
		usually simply written as 
		\begin{equation*}
			\begin{tikzpicture}
				\coordinate (A) at (0,0) ;
				\coordinate (B) at (0,1) ;
				\coordinate (C) at (1,0) ;
				\coordinate (D) at (1,1) ;

				\draw[-] (A) to node[left ]{$t_H(\gamma)$} (B);
				\draw[-] (C) to node[right]{$s_H(\gamma)$} (D);
				\draw[-] (C) to node[below]{$s_G(\gamma)$} (A);
				\draw[-] (D) to node[above]{$t_G(\gamma)$} (B);

				\node at (0.5,0.5) {$\gamma$};
			\end{tikzpicture}
		\end{equation*}
		
		\begin{example}\label{ex_pair_gpd}
		Let $G\rr M$ be a Lie groupoid. Then the pair groupoid $G\times G\rr G$ has a double Lie groupoid structure 
\[\begin{tikzcd}
	{G\times G} & {M\times M} \\
	G & M
	\arrow[shift right, from=1-1, to=1-2]
	\arrow[shift left, from=1-1, to=1-2]
	\arrow[shift right, from=1-1, to=2-1]
	\arrow[shift left, from=1-1, to=2-1]
	\arrow[shift right, from=1-2, to=2-2]
	\arrow[shift left, from=1-2, to=2-2]
	\arrow[shift right, from=2-1, to=2-2]
	\arrow[shift left, from=2-1, to=2-2]
\end{tikzcd}\]
with the product groupoid structure on the top edge. The core of this double Lie groupoid is diffeomorphic to $G$ via 
$g\mapsto (g, 1_{\s(g)})$ and the multiplication $(g, 1_{\s(g)})\cdot (h, 1_{\s(h)})=(gh, 1_{\s(h)})$. The core diagram of $G\times G$ is then
\[\begin{tikzcd}
	G & {M\times M} \\
	G & M
	\arrow["{(\tg, \s)}", from=1-1, to=1-2]
	\arrow["{\id_G}"', from=1-1, to=2-1]
	\arrow[shift right, from=1-1, to=2-2]
	\arrow[shift left, from=1-1, to=2-2]
	\arrow[shift right, from=1-2, to=2-2]
	\arrow[shift left, from=1-2, to=2-2]
	\arrow[shift right, from=2-1, to=2-2]
	\arrow[shift left, from=2-1, to=2-2]
\end{tikzcd}\]
and it is transitive if and only if $G\rr M$ is transitive.
The elements of $G\times G$ are visualized as squares
\begin{equation*}
			\begin{tikzpicture}
				\coordinate (A) at (0,0) ;
				\coordinate (B) at (0,1.5) ;
				\coordinate (C) at (1.5,0) ;
				\coordinate (D) at (1.5,1.5) ;

				\draw[-] (A) to node[left ]{$g_1$} (B);
				\draw[-] (C) to node[right]{$g_2$} (D);
				\draw[-] (C) to node[below]{$(\s(g_1), \s(g_2))$} (A);
				\draw[-] (D) to node[above]{$(\tg(g_1), \tg(g_2))$} (B);

				\node at (0.75,0.75) {$(g_1,g_2)$};
			\end{tikzpicture}
		\end{equation*}
with $(g_1,g_2)\in G\times G$.
		\end{example}

		A \textbf{double Lie groupoid morphism} is a 4-tuple of smooth maps $(\Phi\colon \Gamma \to \Gamma', \phi_G\colon G \to G', \phi_H\colon H \to H', \phi_M\colon M \to M')$ such that in
\begin{equation}\label{morphism_dlg}
\begin{tikzcd}
	\Gamma && {\Gamma'} \\
	& H && {H'} \\
	G && {G'} \\
	& M && {M'}
	\arrow["\Phi", from=1-1, to=1-3]
	\arrow[shift right, from=1-1, to=2-2]
	\arrow[shift left, from=1-1, to=2-2]
	\arrow[shift left, from=1-1, to=3-1]
	\arrow[shift right, from=1-1, to=3-1]
	\arrow[shift right, from=1-3, to=2-4]
	\arrow[shift left, from=1-3, to=2-4]
	\arrow[shift right, dashed, from=1-3, to=3-3]
	\arrow[shift left, dashed, from=1-3, to=3-3]
	\arrow["{\phi_H}"{pos=0.3}, from=2-2, to=2-4]
	\arrow[shift right, from=2-2, to=4-2]
	\arrow[shift left, from=2-2, to=4-2]
	\arrow[shift right, from=2-4, to=4-4]
	\arrow[shift left, from=2-4, to=4-4]
	\arrow["{\phi_G}"{pos=0.7}, dashed, from=3-1, to=3-3]
	\arrow[shift right, from=3-1, to=4-2]
	\arrow[shift left, from=3-1, to=4-2]
	\arrow[shift right, dashed, from=3-3, to=4-4]
	\arrow[shift left, dashed, from=3-3, to=4-4]
	\arrow["{\phi_M}"', from=4-2, to=4-4]
\end{tikzcd}
\end{equation}
		the top, bottom, front and back squares are Lie groupoid morphisms.
	If all four maps are bijective (and their inverses are also smooth), the two double groupoids are called \textbf{isomorphic} and $\Phi$ an \textbf{isomorphism}.

\bigskip

Let $(\Gamma, G, H, M)$ be a double Lie groupoid.
Its \textbf{nerve} is the bisimplicial manifold $N_{\bullet,\bullet}\Gamma:=\Gamma^{(\bullet, \bullet)}$ defined by 
\begin{equation*}
	\Gamma^{(p,q)} =
	\left\{
		(\gamma_{i,j})_{\substack{1 \leq i \leq p,\\ 1 \leq j \leq q}} \in \Gamma^{p \times q}
		\left|
		\begin{aligned}
			s_H(\gamma_{i,j}) &= t_H(\gamma_{i,j+1}) && \forall\, 1 \leq i \leq p,\, 1 \leq j < q \\
			s_G(\gamma_{i,j}) &= t_G(\gamma_{i+1,j}) && \forall\, 1 \leq i < p,\, 1 \leq j \leq q
		\end{aligned}
	\right.\right\}.
\end{equation*}
for $p,q\geq 1$,
and $\Gamma^{(0,0)} = M$, as well as $\Gamma^{(0,q)}=G^{(q)}$ and $\Gamma^{(p,0)}=H^{(p)}$ for $p,q\geq 1$.
In other words, an element of $\Gamma^{(p,q)}$ forms a rectangular grid of compatible elements of $\Gamma$
\begin{equation*}
	\begin{tikzpicture}[y=-1cm]
			\arrowsquare[0,0,1,1,$\gamma_{1,1}$,tlbr]
			\arrowsquare[1,0,1,1,$\gamma_{1,2}$,tbr]
			\arrowsquare[0,1,1,1,$\gamma_{2,1}$,lbr]
			\arrowsquare[1,1,1,1,$\gamma_{2,2}$,br]

			\node at (2.5, 1) {$\dots$};
			\node at (1, 2.5) {$\vdots$};
			\node at (2.5, 2.5) {$\ddots$};

			\arrowsquare[3,0,1,1,$\gamma_{1,q}$,tlbr]
			\arrowsquare[3,1,1,1,$\gamma_{2,q}$,lbr]

			\arrowsquare[0,3,1,1,$\gamma_{p,1}$,tlbr]
			\arrowsquare[1,3,1,1,$\gamma_{p,2}$,tbr]

			\node at (2.5,3.5) {$\dots$};
			\node at (3.5,2.5) {$\vdots$};

			\arrowsquare[3,3,1,1,$\gamma_{p,q}$,tlbr]
	\end{tikzpicture}
\end{equation*}

The inner face maps consist of multiplying two adjacent columns or rows together.
Hence applying the vertical face map $v^{p,q}_i$ with $0 < i < q$ to the element above yields the following mapping $\Gamma^{(p,q)} \to \Gamma^{(p,q-1)}$:
\begin{equation*}
	\begin{tikzpicture}[y=-1cm]
		\arrowgrid[0,0,2,2,1,1,\gamma]

		\node at (1, 2.5) {$\vdots$};
		\arrowgrid[0,3,2,1,p,1,\gamma]

		\node at (2.5, 1) {$\dots$};
		\arrowgrid[3,0,1,2,1,q,\gamma]

		\node at (2.5, 2.5) {$\ddots$};
		\node at (2.5, 3.5) {$\dots$};
		\node at (3.5, 2.5) {$\vdots$};
		\arrowgrid[3,3,1,1,p,q,\gamma]

		\node at (5,2) {$\mapsto$};

		\arrowgrid[6,0,2,2,1,1,\gamma]

		\node at (7, 2.5) {$\vdots$};
		\arrowgrid[6,3,2,1,p,1,\gamma]

		\node at (8.5, 1) {$\dots$};
		\arrowsquare[9,0,2,1,$\gamma_{1,i} \cdot \gamma_{1,i+1}$,tlr]
		\arrowsquare[9,1,2,1,$\gamma_{2,i} \cdot \gamma_{2,i+1}$,tlrb]

		\node at (8.5, 2.5) {$\ddots$};
		\node at (8.5, 3.5) {$\dots$};
		\node at (10, 2.5) {$\vdots$};
		\arrowsquare[9,3,2,1,$\gamma_{p,i} \cdot \gamma_{p,i+1}$,tlrb]

		\node at (11.5, 1) {$\dots$};
		\arrowgrid[12,0,1,2,1,q,\gamma]

		\node at (11.5, 2.5) {$\ddots$};
		\node at (11.5, 3.5) {$\dots$};
		\node at (12.5, 2.5) {$\vdots$};
		\arrowgrid[12,3,1,1,p,q,\gamma]
	\end{tikzpicture}\text{.}
\end{equation*}
The face maps $v_0^{p,q}$ and $v_q^{p,q}$ delete the first and last columns, respectively.
The  horizontal face maps $h_i^{p,q}$ for $0 \leq i \leq p$ are defined accordingly.

In particular, the interchange law ensures that vertical and horizontal face maps commute.

The $i$-th degeneracy maps insert an additional row or column of trivial elements after the $i$-th row or column.
The vertical degeneracy map $\mu^{p,q}_i$ maps $(\gamma_{k,l}) \in \Gamma^{(p,q)}$ into $\Gamma^{(p,q+1)}$ as follows:
\begin{equation*}
	\begin{tikzpicture}[y=-1cm]
			\arrowsquare[0,0,1,1,$\gamma_{1,1}$,tlbr]
			\arrowsquare[1,0,1,1,$\gamma_{1,2}$,tbr]
			\arrowsquare[0,1,1,1,$\gamma_{2,1}$,lbr]
			\arrowsquare[1,1,1,1,$\gamma_{2,2}$,br]

			\node at (2.5, 1) {$\dots$};
			\node at (1, 2.5) {$\vdots$};
			\node at (2.5, 2.5) {$\ddots$};

			\arrowsquare[3,0,1,1,$\gamma_{1,q}$,tlbr]
			\arrowsquare[3,1,1,1,$\gamma_{2,q}$,lbr]

			\arrowsquare[0,3,1,1,$\gamma_{p,1}$,tlbr]
			\arrowsquare[1,3,1,1,$\gamma_{p,2}$,tbr]

			\node at (2.5,3.5) {$\dots$};
			\node at (3.5,2.5) {$\dots$};

			\arrowsquare[3,3,1,1,$\gamma_{p,q}$,tlbr]

			\node at (4.5,2) {$\mapsto$};

			\begin{scope}[shift={(-1,0)}]
			\arrowsquare[6,0,1,1,$\gamma_{1,1}$,tlbr]
			\arrowsquare[7,0,1,1,$\gamma_{1,2}$,tbr]
			\arrowsquare[6,1,1,1,$\gamma_{2,1}$,lbr]
			\arrowsquare[7,1,1,1,$\gamma_{2,2}$,br]
			\node at (7, 2.5) {$\vdots$};
			\arrowsquare[6,3,1,1,$\gamma_{p,1}$,tlbr]
			\arrowsquare[7,3,1,1,$\gamma_{p,2}$,tbr]

			\node at (8.5, 1) {$\dots$};
			\node at (8.5, 2.5) {$\ddots$};
			\node at (8.5, 3.5) {$\dots$};

			\arrowsquare[9,0,1,1,$\gamma_{1,i}$,tlbr]
			\arrowsquare[9,1,1,1,$\gamma_{2,i}$,lbr]
			\node at (9.5,2.5) {$\vdots$};
			\arrowsquare[9,3,1,1,$\gamma_{p,i}$,tlbr]

			\arrowsquare[10,0,1,1,$\bar{1}_{1}$,tbr]
			\arrowsquare[10,1,1,1,$\bar{1}_{2}$,br]
			\node at (10.5,2.5) {$\vdots$};
			\arrowsquare[10,3,1,1,$\bar{1}_{p}$,tlbr]

			\arrowsquare[11,0,1,1,$\gamma_{1,i+1}$,tbr]
			\arrowsquare[11,1,1,1,$\gamma_{2,i+1}$,br]
			\node at (11.5,2.5) {$\vdots$};
			\arrowsquare[11,3,1,1,$\gamma_{p,i+1}$,tlbr]

			\node at (12.5, 1) {$\dots$};
			\node at (12.5, 2.5) {$\ddots$};
			\node at (12.5, 3.5) {$\dots$};

			\arrowsquare[13,0,1,1,$\gamma_{1,q}$,tlbr]
			\arrowsquare[13,1,1,1,$\gamma_{2,q}$,lbr]
			\node at (13.5,2.5) {$\vdots$};
			\arrowsquare[13,3,1,1,$\gamma_{p,q}$,tlbr]
			\end{scope}
	\end{tikzpicture}
\end{equation*}
where $\bar{1}_j $ stands for 
$\tilde{1}_{t_H(\gamma_{j,i+1})} = \tilde{1}_{s_H(\gamma_{j,i})}$.
The  horizontal degeneracy maps $\eta_i^{p,q}$ for $0 \leq i \leq p$ are again defined analogously, with rows replacing columns.

It is  easy to see from this description that the bisimplicial identities are satisfied by the horizontal and vertical face and degeneracy maps.

\bigskip

Note that a morphism of double Lie groupoids as in \eqref{morphism_dlg} defines in an obvious manner a morphism of the corresponding bisimplicial nerves.
For all $p,q\geq 0$,  the smooth map $\Phi^{(p,q)}\colon \Gamma^{(p,q)}\to{\Gamma'}^{(p,q)}$ is defined by \[\Phi^{(p,q)}\left((\gamma_{i,j})_{\substack{1 \leq i \leq p,\\ 1 \leq j \leq q}}\right)=\left(\Phi(\gamma_{i,j})\right)_{\substack{1 \leq i \leq p,\\ 1 \leq j \leq q}}
\]
and intertwines the face and degeneracy maps.

\subsection{The bar construction on double Lie groupoids}\label{sec:bar-construction}
The \textbf{bar construction} as first described by Artin and Mazur \cite{ArMa66} is a natural way to obtain a simplicial manifold from a bisimplicial manifold.
In \cite{MeTa11}, Mehta and Tang apply it to a double Lie groupoid $(\Gamma, G, H, M)$ and obtain\footnote{Note that here, the double Lie groupoids are strict, so the bar construction yields Lie $2$-groupoids, see \cite{MeTa11}.} a Lie $2$-groupoid $W_\bullet\Gamma$. For simplicity, only the result of this construction is given here. See \cite{MeTa11} or \cite{ArMa66} for the general bar construction.

For $r=0$, $W_0\Gamma:=M$, while for $r=1$,
\[ W_1\Gamma=\{(g,h)\in G\times H \mid \s(g)=\tg(h)\}.
\]
For $r\geq 2$, 
\[ W_r\Gamma=\left\{\begin{array}{c}
\left(g, \left(\gamma_{i,j}\right)_{1\leq i\leq j< r}, h\right)\\
\in G\times \Gamma^{\frac{r(r-1)}{2}}\times H 
\end{array}\left| \begin{array}{c}
\s(g)=\tg(\tg_G(\gamma_{1,1}))\\
\s_H(\gamma_{i,j})=\tg_H(\gamma_{i,(j+1)}) \text{ for } 1\leq i< r\text{ and } i\leq j<r-1 \\
\s_G(\gamma_{i,j})=\tg_G(\gamma_{(i+1),j}) \text{ for } 1\leq j< r\text{ and } 1\leq i<j\\
\tg(h)=\s(\s_H(\gamma_{r-1,r-1}))
\end{array}\right.\right\}
\]
\begin{equation}\label{eq:double-groupoid-w-r}
	\begin{tikzpicture}[y=-1cm]
		\node at (0.5,-0.4) {$g$};
		\draw[->] (1,0) to (0,0);

		\arrowsquare[1,0,1,1,$\gamma_{1,1}$,tlbr]

		\arrowsquare[2,0,1,1,$\gamma_{1,2}$,tbr]
		\arrowsquare[2,1,1,1,$\gamma_{2,2}$,lbr]

		\node at (3.5,1) {$\dots$};
		\arrowsquare[4,0,1,1,$\gamma_{1,r-1}$,tlr]
		\arrowsquare[4,1,1,1,$\gamma_{2,r-1}$,tlbr]

		\node at (3.5,2.5) {$\ddots$};
		\node at (4.5,2.5) {$\vdots$};
		\arrowsquare[4,3,1,1,$\gamma_{r-1,r-1}$,tlbr]
	
		\node[text width=1cm] at (5.6, 4.5) {$h$};
		\draw[->] (5,5) to (5,4);
	\end{tikzpicture}
\end{equation}

The face map $f^r_i\colon W_r\Gamma \to W_{r-1}\Gamma$ for $0 < i < r$ sends the element \eqref{eq:double-groupoid-w-r} above to
\begin{equation*}
	\begin{tikzpicture}[y=-1.2cm, x=1.2cm, scale=1]
		\node at (0.5, -0.4) {$g$};

		\draw[->] (1,0) to (0,0);

		\arrowsquare[1,0,1,1,$\gamma_{1,1}$,tlbr]
		\arrowsquare[2,0,1,1,$\gamma_{1,2}$,tlbr]
		\arrowsquare[2,1,1,1,$\gamma_{2,2}$,tlbr]

		\node at (3.5,0.5) {$\dots$};
		\node at (3.5,1.5) {$\ddots$};

		\arrowsquare[4,0,2,1,$\gamma_{1,i-1} \cdot \gamma_{1,i}$,tlbr]
		\node at (5,1.5) {$\vdots$};
		\arrowsquare[4,2,2,1,$\gamma_{i-1,i-1} \cdot \gamma_{i-1,i}$,tlbr]

		\draw[dotted] (5,4) to (5,3);
		\draw[dotted] (6,4) to (5,4);

		\arrowsquare[6,0,1,1,$\gamma_{1,i+1}$,tbr]
		\node at (6.5, 1.5) {$\vdots$};
		\arrowsquare[6,2,1,1,$\gamma_{i-1,i+1}$,tbr]
		\arrowsquare[6,3,1,2,
		$\substack{\gamma_{i,i+1} \\ \cdot \\ \gamma_{i+1,i+1}}$
		,lbr]

		\node at (7.5,0.5) {$\dots$};
		\node at (7.5,2.5) {$\dots$};
		\node at (7.5,4.0) {$\dots$};
		\node at (7.5,5.5) {$\ddots$};

		\arrowsquare[8,0,1,1,$\gamma_{1,r-1}$,tlbr]
		\node at (8.5,1.5) {$\vdots$};
		\arrowsquare[8,2,1,1,$\gamma_{i-1,r-1}$,tlbr]
		\arrowsquare[8,3,1,2,
		$\substack{\gamma_{i,r-1} \\ \cdot \\ \gamma_{i+1,r-1}}$
		,lbr]
		\node at (8.5,5.5) {$\vdots$};
		\arrowsquare[8,6,1,1,$\gamma_{r-1,r-1}$,tlbr]
		\draw[->] (9,8) to (9,7);
		\node at (9.3,7.5) {$h$};
	\end{tikzpicture}
\end{equation*}
in $W_{r-1}\Gamma$, where for $i=1$, $\gamma_{1,0}\cdot \gamma_{1,1}$ is replaced by $g\cdot \tg_G(\gamma_{1,1})$ and for $i=r-1$
the product $\gamma_{r-1,r-1}\cdot \gamma_{r,r-1}$ stands for $\s_H(\gamma_{r-1,r-1})\cdot h$.
The $0$-th and $r$-th face maps simply delete the first row and last column respectively, i.e.~the basic simplex from \eqref{eq:double-groupoid-w-r} has the following images under these maps.
\begin{equation*}
	\begin{tikzpicture}[y=-1.2cm,x=1.2cm]
		\node at (0.5,-0.4) {$s_G(\gamma_{1,1})$};
		\draw[->] (1,0) to (0,0);

		\arrowsquare[1,0,1,1,$\gamma_{2,2}$,tlbr]

		\arrowsquare[2,0,1,1,$\gamma_{2,3}$,tlr]
		\arrowsquare[2,1,1,1,$\gamma_{3,3}$,tlbr]

		\node at (3.5,1) {$\dots$};
		\arrowsquare[4,0,1,1,$\gamma_{2,r-1}$,tlr]
		\arrowsquare[4,1,1,1,$\gamma_{3,r-1}$,tlbr]

		\node at (3.5,2.5) {$\ddots$};
		\node at (4.5,2.5) {$\vdots$};
		\arrowsquare[4,3,1,1,$\gamma_{r-1,r-1}$,tlbr]
	
		\node[text width=1cm] at (5.6, 4.5) {$h$};
		\draw[->] (5,5) to (5,4);
	\end{tikzpicture}
	\begin{tikzpicture}[y=-1.2cm,x=1.2cm]
		\node at (0.5,-0.4) {$g$};
		\draw[->] (1,0) to (0,0);

		\arrowsquare[1,0,1,1,$\gamma_{1,1}$,tlbr]

		\arrowsquare[2,0,1,1,$\gamma_{1,2}$,tlr]
		\arrowsquare[2,1,1,1,$\gamma_{2,2}$,tlbr]

		\node at (3.5,1) {$\dots$};
		\arrowsquare[4,0,1,1,$\gamma_{1,r-2}$,tlr]
		\arrowsquare[4,1,1,1,$\gamma_{2,r-2}$,tlbr]

		\node at (3.5,2.5) {$\ddots$};
		\node at (4.5,2.5) {$\vdots$};
		\arrowsquare[4,3,1,1,$\gamma_{r-2,r-2}$,tlbr]
	
		\node at (6.0, 4.5) {$t_H(\gamma_{r-1,r-1})$};
		\draw[->] (5,5) to (5,4);
	\end{tikzpicture}
\end{equation*}
In particular, the $1$-st level maps $f^1_0$ and $f^1_1$  map the $1$-simplex
\begin{equation*}
	\begin{tikzpicture}[y=-1cm]
		\node at (0.5,-0.4) {$g$};
		\draw[->] (1,0) to (0,0);
		\node at (1.4,0.5) {$h$};
		\draw[->] (1,1) to (1,0);
	\end{tikzpicture}
\end{equation*}
to $\s(h)$ and $\tg(g)$ respectively.
The image of the basic $2$-simplex 
\begin{equation*}
	\begin{tikzpicture}[y=-1cm]
		\node at (0.5,-0.4) {$g$};
		\draw[->] (1,0) to (0,0);
		\arrowsquare[1,0,1,1,$\gamma$,tlbr]
		\node[text width=0.8cm] at (2.5, 1.5) {$h$};
		\draw[->] (2,2) to (2,1);
	\end{tikzpicture}
\end{equation*}
in $W_2\Gamma$ can be visualized by the following diagrams:
\begin{equation*}
	\begin{tikzpicture}[y=-1cm]
	\begin{scope}[shift={(0,0)}]
		\draw[dotted] (1,0) to (0,0);
		\draw[dotted] (2,0) to (1,0);
		\draw[dotted] (1,1) to (1,0);
		\draw[dotted] (2,1) to (2,0);
		\node at (1.5,0.6) {$s_G(\gamma)$};
		\draw[->] (2,1) to (1,1);
		\node at (2.3,1.5) {$h$};
		\draw[->] (2,2) to (2,1);
		\node at (1, 2.5) {$f_0^2$};
	\end{scope}
	\begin{scope}[shift={(5,0)}]
		\node at (1,-0.4) {$g \cdot t_G(\gamma)$};
		\draw[->] (2,0) to (0,0);
		\draw[dotted] (1,1) to (1,0);
		\draw[dotted] (2,1) to (1,1);
		\node[text width=1.5cm] at (3,1) {$s_H(\gamma) \cdot h$};
		\draw[->] (2,2) to (2,0);
		\node at (1, 2.5) {$f_1^2$};
	\end{scope}
	\begin{scope}[shift={(10,0)}]
		\node at (0.5, -0.4) {$g$};
		\draw[->] (1,0) to (0,0);
		\draw[dotted] (2,0) to (1,0);
		\node at (1.6, 0.5) {$t_H(\gamma)$};
		\draw[->] (1,1) to (1,0);
		\draw[dotted] (2,1) to (2,0);
		\draw[dotted] (2,1) to (1,1);
		\draw[dotted] (2,2) to (2,1);
		\node at (1, 2.5) {$f_2^2$};
	\end{scope}
	\end{tikzpicture}
\end{equation*}
see \cite[665]{MeTa11}.
The maps $f^3_0, f^3_1, f^3_2, f^3_3$ map the basic $3$-simplex
\begin{equation*}
	\begin{tikzpicture}[y=-1cm]
		\draw[->] (1,0) to (0,0);
		\node at (0.5, -0.4) {$g$};
		\arrowsquare[1,0,1,1,$\gamma_{1,1}$,tlbr]
		\arrowsquare[2,0,1,1,$\gamma_{1,2}$,tbr]
		\arrowsquare[2,1,1,1,$\gamma_{2,2}$,lbr]
		\draw[->] (3,3) to (3,2);
		\node at (3.4, 2.5) {$h$};
	\end{tikzpicture}
\end{equation*}
in $W_3\Gamma$ to
\begin{equation*}
	\begin{tikzpicture}[y=-1cm]
		\begin{scope}
			\draw[dotted] (1,0) to (0,0);
			\draw[dotted] (2,0) to (1,0);
			\draw[dotted] (3,0) to (2,0);
			\draw[dotted] (1,1) to (1,0);
			\draw[dotted] (2,1) to (2,0);
			\draw[dotted] (3,1) to (3,0);
			\draw[->] (2,1) to (1,1);
			\node at (1.5, 0.6) {$s_G(\gamma_{1,1})$};
			\arrowsquare[2,1,1,1,$\gamma_{2,2}$,tlbr]
			\draw[->] (3,3) to (3,2);
			\node at (3.4, 2.5) {$h$};

			\node at (1.5, 3) {$f^3_0$};
		\end{scope}

		\begin{scope}[shift={(4,0)}]
			\draw[->] (2,0) to (0,0);
			\node at (1, -0.4) {$g \cdot t_G(\gamma_{1,1})$};
			\draw[dotted] (1,1) to (1,0);
			\draw[dotted] (2,1) to (1,1);
			\arrowsquare[2,0,1,2,$\substack{\gamma_{1,2} \\ \cdot \\ \gamma_{2,2}}$,tlbr]
			\draw[->] (3,3) to (3,2);
			\node at (3.4, 2.5) {$h$};

			\node at (1.5, 3) {$f^3_1$};
		\end{scope}

		\begin{scope}[shift={(0,4)}]
			\draw[->] (1,0) to (0,0);
			\node at (0.5, -0.4) {$g$};
			\arrowsquare[1,0,2,1,$\gamma_{1,1} \cdot \gamma_{1,2}$,tlbr]
			\draw[dotted] (2,1) to (2,2);
			\draw[dotted] (2,2) to (3,2);
			\draw[->] (3,3) to (3,1);
			\node at (3.5, 2) {$\,\,\,\substack{s_H(\gamma_{2,2}) \\ \cdot \\ h}$};

			\node at (1.5, 3) {$f^3_2$};
		\end{scope}
		\begin{scope}[shift={(4,4)}]
			\draw[->] (1,0) to (0,0);
			\node at (0.5, -0.4) {$g$};
			\arrowsquare[1,0,1,1,$\gamma_{1,1}$,tlbr]
			\draw[->] (2,2) to (2,1);
			\node at (2.8,1.5) {$t_H(\gamma_{2,2})$};
			\draw[dotted] (3,1) to (3,0);
			\draw[dotted] (3,2) to (3,1);
			\draw[dotted] (3,3) to (3,2);
			\draw[dotted] (3,0) to (2,0);
			\draw[dotted] (3,1) to (2,1);
			\draw[dotted] (3,2) to (2,2);

			\node at (1.5, 3) {$f^3_3$};
		\end{scope}
	\end{tikzpicture}
\end{equation*}

The image of the basic $r$-simplex as  above under the degeneracy map $\delta^r_i$ for $0 < i < r$ is constructed by inserting a new unit column between the $(i-1)$-th and $i$-th column, and a new unit row between the $i$-th and $(i+1)$-th row:
\begin{equation*}
	\begin{tikzpicture}[y=-1.2cm, x=1.2cm, scale=1]
		\node at (0.5, -0.4) {$g$};

		\draw[->] (1,0) to (0,0);

		\arrowsquare[1,0,1,1,$\gamma_{1,1}$,tlbr]

		\node at (2.5,0.5) {$\dots$};
		\node at (2.5,1.5) {$\ddots$};

		\arrowsquare[3,0,1,1,$\gamma_{1,i-1}$, tlbr]
		\node at (3.5,1.5) {$\vdots$};
		\arrowsquare[3,2,1,1,$\gamma_{i-1,i-1}$, tlbr]

		\arrowsquare[4,0,1,1,{\color{PineGreen}$\bar{1}^H_1$},tbr]
		\node at (4.5,1.5) {$\vdots$};
		\arrowsquare[4,2,1,1,{\color{PineGreen}$\bar{1}^H_{i-1}$},tbr]
		\arrowsquare[4,3,1,1,{\color{PineGreen}$\bar{1}^H_i$},lbr]

		\arrowsquare[5,0,1,1,$\gamma_{1,i}$,tbr]
		\node at (5.5,1.5) {$\vdots$};
		\arrowsquare[5,2,1,1,$\gamma_{i-1,i}$,tbr]
		\arrowsquare[5,3,1,1,$\gamma_{i,i}$,br]
		\arrowsquare[5,4,1,1,{\color{PineGreen}$\bar{1}^G_i$},lbr]

		\arrowsquare[6,0,1,1,$\gamma_{1,i+1}$,tbr]
		\node at (6.5,1.5) {$\vdots$};
		\arrowsquare[6,2,1,1,$\gamma_{i-1,i+1}$,tbr]
		\arrowsquare[6,3,1,1,$\gamma_{i,i+1}$,br]
		\arrowsquare[6,4,1,1,{\color{PineGreen}$\bar{1}^G_{i+1}$},lbr]
		\arrowsquare[6,5,1,1,$\gamma_{i+1,i+1}$,lbr]

		\node at (7.5,0.5) {$\dots$};
		\node at (7.5,2.5) {$\dots$};
		\node at (7.5,3.5) {$\dots$};
		\node at (7.5,4.5) {$\dots$};
		\node at (7.5,5.5) {$\dots$};
		\node at (7.5,6.5) {$\ddots$};

		\arrowsquare[8,0,1,1,$\gamma_{1,r-1}$,tlbr]
		\node at (8.5,1.5) {$\vdots$};
		\arrowsquare[8,2,1,1,$\gamma_{i-1,r-1}$,tlbr]
		\arrowsquare[8,3,1,1,$\gamma_{i,r-1}$,lbr]
		\arrowsquare[8,4,1,1,{\color{PineGreen}$\bar{1}^G_{r-1}$},lbr]
		\arrowsquare[8,5,1,1,$\gamma_{i+1,r-1}$,lbr]
		\node at (8.5,6.5) {$\vdots$};
		\arrowsquare[8,7,1,1,$\gamma_{r-1,r-1}$,tlbr]
		\draw[->] (9,9) to (9,8);

		\node at (9.4, 8.5) {$h$};
	\end{tikzpicture}
\end{equation*}
in $W_{r+1}\Gamma$ with
	$\bar{1}^H_j = \tilde{1}(t_H(\gamma_{j,i}))$ for all $1 \leq j \leq i$
and 	$\bar{1}^G_j = \tilde{1}(t_G(\gamma_{i,j}))$ for all $i \leq j < r$.
The $0$-th degeneracy map $\delta^r_0$ sends the standard simplex \eqref{eq:double-groupoid-w-r} to 
\begin{equation*}
	\begin{tikzpicture}[y=-1.2cm, x=1.2cm, scale=1]
		\node at (0.5,-0.4) {\color{PineGreen}$1^G_{t(g)}$};
		\draw[->] (1,0) to (0,0);

		\arrowsquare[1,0,1,1,\color{PineGreen}$\bar{1}^G_0$,tlbr]
		\node at (1.5,1.4) {$g$};
		\node at (1.5,-0.4) {\color{PineGreen}$g$};
		\arrowsquare[2,0,1,1,\color{PineGreen}$\bar{1}^G_1$,tr]
		\arrowsquare[3,0,1,1,\color{PineGreen}$\bar{1}^G_2$,tr]
		\node at (4.5,0.5) {$\dots$};
		\arrowsquare[5,0,1,1,\color{PineGreen}$\bar{1}^G_{r-1}$,tlr]

		\arrowsquare[2,1,1,1,$\gamma_{1,1}$,tlbr]

		\arrowsquare[3,1,1,1,$\gamma_{1,2}$,tlr]
		\arrowsquare[3,2,1,1,$\gamma_{2,2}$,tlbr]

		\node at (4.5,2) {$\dots$};
		\arrowsquare[5,1,1,1,$\gamma_{1,r-1}$,tlr]
		\arrowsquare[5,2,1,1,$\gamma_{2,r-1}$,tlbr]

		\node at (4.5,3.5) {$\ddots$};
		\node at (5.5,3.5) {$\vdots$};
		\arrowsquare[5,4,1,1,$\gamma_{r-1,r-1}$,tlbr]
	
		\node[text width=1cm] at (6.6, 5.5) {$h$};
		\draw[->] (6,6) to (6,5);
	\end{tikzpicture}
\end{equation*}
with
\begin{equation*}
	\bar{1}^G_0 = \tilde{1}(g)
\end{equation*}
and $\bar{1}^G_j$ for $0 < j < r$ as above.
Meanwhile, the last degeneracy map $\delta^r_r$ maps \eqref{eq:double-groupoid-w-r} to
\begin{equation*}
	\begin{tikzpicture}[y=-1.2cm,x=1.2cm]
		\node at (0.5,-0.4) {$g$};
		\draw[->] (1,0) to (0,0);

		\arrowsquare[1,0,1,1,$\gamma_{1,1}$,tlbr]

		\arrowsquare[2,0,1,1,$\gamma_{1,2}$,tlr]
		\arrowsquare[2,1,1,1,$\gamma_{2,2}$,tlbr]

		\node at (3.5,1) {$\dots$};
		\arrowsquare[4,0,1,1,$\gamma_{1,r-1}$,tlr]
		\arrowsquare[4,1,1,1,$\gamma_{2,r-1}$,tlbr]

		\arrowsquare[5,0,1,1,\color{PineGreen}$\bar{1}^H_1$,tbr]
		\arrowsquare[5,1,1,1,\color{PineGreen}$\bar{1}^H_2$,br]
		\node at (5.5,2.5) {$\vdots$};
		\arrowsquare[5,3,1,1,\color{PineGreen}$\bar{1}^H_{r-1}$,tbr]
		\arrowsquare[5,4,1,1,\color{PineGreen}$\bar{1}^H_{r}$,lbr]

		\node at (3.5,2.5) {$\ddots$};
		\node at (4.5,2.5) {$\vdots$};
		\arrowsquare[4,3,1,1,$\gamma_{r-1,r-1}$,tlbr]
		\node at (4.7,4.5) {$h$};
	
		\node[text width=1cm] at (6.6, 4.5) {\color{PineGreen}$h$};
		\node[text width=1cm] at (6.6, 5.5) {\color{PineGreen}$1^H_{s(h)}$};
		\draw[->] (6,6) to (6,5);
	\end{tikzpicture}
\end{equation*}
with
\begin{align*}
	\bar{1}^H_r = \tilde{1}(h)
\end{align*}
and $\bar{1}^H_j$ for $0 < j < r$ as before.

The following proposition is easy to prove. 
\begin{proposition}
Let $(\Gamma, G, H, M)$ and $(\Gamma',G',H',M')$ be double Lie groupoids and consider a morphism $(\Phi\colon \Gamma \to \Gamma', \phi_G\colon G \to G', \phi_H\colon H \to H', \phi_M\colon M \to M')$ of double Lie groupoids as in \eqref{morphism_dlg}. Then $(\Phi, \phi_G, \phi_H, \phi_M)$ defines a morphism 
\[\Phi_\bullet\colon W_\bullet\Gamma\to W_\bullet\Gamma'
\]
of simplicial manifolds. The map $\Phi_0\colon W_0\Gamma=M\to W_0\Gamma'=M'$ equals $\phi_M$ and for $r\geq 1$ the map $\Phi_r\colon W_r\Gamma\to W_r\Gamma'$ is defined by 
\[ \Phi_r\left(g, \left(\gamma_{i,j}\right)_{1\leq i\leq j< r}, h\right)=\left(\phi_G(g), \left(\Phi(\gamma_{i,j})\right)_{1\leq i\leq j< r}, \phi_H(h)\right)
\]
for all $\left(g, \left(\gamma_{i,j}\right)_{1\leq i\leq j< r}, h\right)\in W_r\Gamma$.
\end{proposition}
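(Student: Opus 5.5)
The plan is to check, in turn, three things: that each $\Phi_r$ is well defined and smooth with values in $W_r\Gamma'$, that it commutes with all face maps, and that it commutes with all degeneracy maps. Everything will be reduced to the fact that $(\Phi,\phi_G,\phi_H,\phi_M)$ is a morphism of double Lie groupoids. Concretely, $\Phi$ is a groupoid morphism over $\phi_G$ for $\Gamma\rr G$ and over $\phi_H$ for $\Gamma\rr H$, while $\phi_G,\phi_H$ are groupoid morphisms over $\phi_M$. Hence these maps intertwine all sources, targets, multiplications $\cdot_G$, $\cdot_H$, $\cdot$, and units $\tilde 1$, $1^G$, $1^H$.

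\emph{Well-definedness and smoothness.} Each defining condition of $W_r\Gamma$ is an equality between sources and targets: $\s(g)=\tg(\tg_G(\gamma_{1,1}))$, the conditions $\s_H(\gamma_{i,j})=\tg_H(\gamma_{i,j+1})$ and $\s_G(\gamma_{i,j})=\tg_G(\gamma_{i+1,j})$, and $\tg(h)=\s(\s_H(\gamma_{r-1,r-1}))$. Applying $\phi_M$, $\phi_G$ or $\phi_H$ to both sides and using $\phi_M\circ\s=\s'\circ\phi_G$, $\phi_G\circ\tg_G=\tg_G'\circ\Phi$, and so on, yields the corresponding conditions in $W_r\Gamma'$. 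For $r=1$ the same argument works with $\s(g)=\tg(h)$. Smoothness follows because $\Phi_r$ is the restriction of the smooth product map $\phi_G\times\Phi^{r(r-1)/2}\times\phi_H$ to the embedded submanifold $W_r\Gamma$, and the image lies in the submanifold $W_r\Gamma'$. (Equivalently, $\Phi_r$ is the map induced on the bar construction by the bisimplicial morphism $\Phi^{(\bullet,\bullet)}$ already noted above; one could invoke naturality of the Artin--Mazur construction, but I will argue directly from the explicit formulas.)

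\emph{Faces.} I will treat $f_0^r$, $f_r^r$ and the inner faces $f_i^r$ for $0<i<r$ separately, together with the low cases $r=1,2$. The outer faces delete a row or a column and replace the boundary arrow by $\s_G(\gamma_{1,1})$ or $\tg_H(\gamma_{r-1,r-1})$. These commute with $\Phi_\bullet$ because $\phi_G\circ\s_G=\s_G'\circ\Phi$ and $\phi_H\circ\tg_H=\tg_H'\circ\Phi$. For $r=1$ one uses $\phi_M\circ\s=\s'\circ\phi_H$ and $\phi_M\circ\tg=\tg'\circ\phi_G$. The inner faces multiply adjacent entries: $\gamma_{k,i-1}\cdot_H\gamma_{k,i}$ for $k\le i-1$ and $\gamma_{i,j}\cdot_G\gamma_{i+1,j}$ for $j\geq i+1$. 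At the boundary they also form the products $g\cdot\tg_G(\gamma_{1,1})$ and $\s_H(\gamma_{r-1,r-1})\cdot h$. Since $\Phi$ is multiplicative for both $\cdot_H$ and $\cdot_G$, and $\phi_G,\phi_H$ are multiplicative and compatible with $\tg_G,\s_H$, each entry of $f_i^r(\Phi_r(x))$ equals the corresponding entry of $\Phi_{r-1}(f_i^r(x))$.

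\emph{Degeneracies.} The maps $\delta_i^r$ insert unit squares $\tilde1(\tg_H(\gamma_{j,i}))$, $\tilde1(\tg_G(\gamma_{i,j}))$, $\tilde1(g)$ and $\tilde1(h)$, together with the units $1^G_{\tg(g)}$ and $1^H_{\s(h)}$. These are preserved because $\Phi\circ\tilde1=\tilde1'\circ\phi_H$ and $\Phi\circ\tilde1=\tilde1'\circ\phi_G$ (units are preserved by groupoid morphisms), and $\phi_G(1^G_m)=1^{G'}_{\phi_M(m)}$. For $r=0$, $\delta_0^0(m)=(1^G_m,1^H_m)$ maps correctly as well.

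None of these steps is conceptually hard. The main obstacle is bookkeeping: the index ranges in the inner face maps, and the boundary conventions for $i=1$ and $i=r-1$, where $g$ and $h$ enter the products. I will handle these by verifying entrywise, grouping the entries into three kinds: those unchanged, those arising from an $H$-product, and those arising from a $G$-product.
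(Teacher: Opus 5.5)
Your proposal is correct. The paper gives no proof; it only says the proposition is easy to prove. The remark just before it, that $\Phi$ induces maps $\Phi^{(p,q)}$ on the bisimplicial nerves intertwining all horizontal and vertical faces and degeneracies, points to the intended argument: naturality of the Artin--Mazur bar construction, under which $\Phi_r$ is simply the induced map on the fibre products defining $W_r$. You mention this route but instead verify everything entrywise from the explicit formulas for $f^r_i$ and $\delta^r_i$.

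The functorial route is shorter and gives smoothness automatically. It also avoids the index bookkeeping: the boundary conventions at $i=1$ and $i=r-1$, and the insertion of $\tilde 1(g)$ and $\tilde 1(h)$. However, it presupposes that identifying the triangular arrays with the Artin--Mazur total simplicial manifold is natural in $\Gamma$. Your direct check is self-contained relative to the formulas the paper actually displays. It only uses that every new entry is a product, a unit $\tilde 1$, or a source/target of old entries, so it does not depend on the exact edge conventions in those formulas.

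When you write it out, state explicitly that the inner face $f^r_i$ discards the diagonal entry $\gamma_{i,i}$, which trivially commutes with applying $\Phi$ entrywise. Also note that your smoothness argument needs $W_r\Gamma'$ to be an embedded submanifold of $G'\times(\Gamma')^{r(r-1)/2}\times H'$. This holds because it is an iterated fibre product along surjective submersions.
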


\begin{example}{The bar construction applied to the pair groupoid of a Lie groupoid}\label{ex_pair_gpd1}
Consider a Lie groupoid $G\rr M$ and the induced double Lie groupoid $G\times G$ as in Example \ref{ex_pair_gpd}.
Here, $W_0(G\times G)=M$ and for $r\geq 1$, the space $W_r(G\times G)$ is the smooth manifold
\[ W_r(G\times G)=G^{(0)}\times G^{(1)}\times \ldots\times G^{(r)}.
\] 
The lower face maps are 
\[ f^1_1\colon M\times G \to M, \quad (x,g)\to x, \qquad f^1_0\colon M\times G\to M, \quad (x,g)\to \s(g),
\]
for $i=0,1,2$
\[ f^2_i\colon M\times G\times G^{(2)} \to M\times G\]
\[f_0^2\colon \left(x,g_1^1,\begin{pmatrix}g^2_1\\ g^2_2\end{pmatrix}\right)\to (\s(g_1^1), g_2^2),
\]
\[f_1^2\colon \left(x,g_1^1,\begin{pmatrix}g^2_1\\ g^2_2\end{pmatrix}\right)\to(x,g_1^2\cdot g_2^2)
\]
\[f_2^2\colon \left(x,g_1^1,\begin{pmatrix}g^2_1\\ g^2_2\end{pmatrix}\right)\to (x, g_1^1),
\]
and for $i=0,1,2,3$
\[ f^3_i\colon M\times G\times G^{(2)}\times G^{(3)} \to M\times G\times G^{(2)}\]
\[f_0^3\colon \left(x,g_1^1,\begin{pmatrix}g^2_1\\ g^2_2\end{pmatrix},\begin{pmatrix}g^3_1\\ g^3_2\\ g^3_3\end{pmatrix}\right)\to \left(\s(g_1^1), g_2^2, \begin{pmatrix} g_2^3\\ g_3^3\end{pmatrix}\right),
\]
\[f_1^3\colon \left(x,g_1^1,\begin{pmatrix}g^2_1\\ g^2_2\end{pmatrix},\begin{pmatrix}g^3_1\\ g^3_2\\ g^3_3\end{pmatrix}\right)\to\left(x,g_1^2\cdot g_2^2,\begin{pmatrix} g_1^3\cdot g_2^3\\ g_3^3\end{pmatrix}\right)
\]
\[f_2^3\colon \left(x,g_1^1,\begin{pmatrix}g^2_1\\ g^2_2\end{pmatrix},\begin{pmatrix}g^3_1\\ g^3_2\\ g^3_3\end{pmatrix}\right)\to \left(x,g_1^1,\begin{pmatrix} g_1^3\\ g_2^3\cdot g_3^3\end{pmatrix}\right)
\]
and 
\[f_3^3\colon \left(x,g_1^1,\begin{pmatrix}g^2_1\\ g^2_2\end{pmatrix},\begin{pmatrix}g^3_1\\ g^3_2\\ g^3_3\end{pmatrix}\right)\to \left(x,g_1^1,\begin{pmatrix} g_1^2\\ g_2^2\end{pmatrix}\right).
\]
It is easy to see that the $3$-horn maps are diffeomorphisms, while the $2$-horn maps
\begin{align*}
	\lambda_{2,0}&=(f^2_1, f^2_2)\colon \left(x,g_1^1,\begin{pmatrix}g^2_1\\ g^2_2\end{pmatrix}\right)\mapsto
	\left(x, g_1^2\cdot g_2^2, x, g_1^1
	\right),\\
	\lambda_{2,1}&=(f^2_0, f^2_2)\colon \left(x,g_1^1,\begin{pmatrix}g^2_1\\ g^2_2\end{pmatrix}\right)\mapsto
	\left(\s(g_1^1), g_2^2, x, g_1^1
	\right), \\
\text{ and }\quad  	\lambda_{2,2}&=(f^2_0, f^2_1)\colon\left(x,g_1^1,\begin{pmatrix}g^2_1\\ g^2_2\end{pmatrix}\right)\mapsto
	\left(\s(g_1^1), g_2^2, x, g_1^2\cdot g_2^2
	\right)
\end{align*}
are surjective submersions.

In general for $r\geq 3$ and $0<i<r$ 
\[ f^r_i\colon G^{(0)}\times G^{(1)}\times \ldots\times G^{(r)}\to G^{(0)}\times G^{(1)}\times \ldots\times G^{(r-1)}, 
\]
sends 
\[ \left( x, g_1^1, (g_1^2, g_2^2), \ldots, (g_j^r)_{j=1,\ldots, r}\right)\]
to \[\left(x, \ldots, (g_j^{i-1})_{1\leq j\leq i-1}, (g_1^{i+1}, \ldots, g_{i-1}^{i+1}, g_i^{i+1}\cdot g_{i+1}^{i+1}) ,\ldots, (g_1^{r}, \ldots, g_{i-1}^{r}, g_i^{r}\cdot g_{i+1}^{r}, g^r_{i+1}, \ldots, g^r_r)\right),
\]
while 
\[ f^r_r\colon G^{(0)}\times G^{(1)}\times \ldots\times G^{(r)}\to G^{(0)}\times G^{(1)}\times \ldots\times G^{(r-1)}
\]
does 
\[\left( x, g_1^1, (g_1^2, g_2^2), \ldots, (g_j^r)_{j=1,\ldots, r}\right)\mapsto \left( x, g_1^1, (g_1^2, g_2^2), \ldots, (g_j^{r-1})_{j=1,\ldots, r-1}\right)
\]
and 
\[ f^r_0\colon G^{(0)}\times G^{(1)}\times \ldots\times G^{(r)}\to G^{(0)}\times G^{(1)}\times \ldots\times G^{(r-1)}
\]
does 
\[\left( x, g_1^1, (g_1^2, g_2^2), \ldots, (g_j^r)_{j=1,\ldots, r}\right)\mapsto \left( \s(g_1^1), g_2^2, \ldots, (g_j^{r})_{j=2,\ldots, r}\right)\text{.}
\]
The degeneracy maps are omitted here and can be derived from the general formula in Section~\ref{sec:bar-construction}.
\end{example}

\section{The bar construction applied to comma double Lie groupoids}
This section briefly recalls the notion of comma double Lie groupoids and describes in detail the \emph{comma Lie $2$-groupoids} produced by applying the bar construction to this class of examples.
\subsection{Comma double Lie groupoids}
Consider two Lie groupoids $K\rr M$ and $G\rr M$ over a
common base $M$ and a Lie groupoid morphism $\Phi\colon K\to G$
fixing $M$. 
The double Lie groupoid $\Lambda:=(\Phi,\Phi)$ with sides $K$ and $G$ and with core
$K$ is defined as follows \cite{BrMa92}.  Its elements are triples
$(k_\tg,g,k_\s)\in K\times G\times K$ such that $\s(k_\tg)=\tg(g)$ and
$\s(k_\s)=\s(g)$. The source and target maps
$\Lambda\to G$ are the maps $(k_\tg,g,k_\s)\mapsto g$ and
$(k_\tg,g,k_\s)\mapsto \Phi(k_\tg)\cdot g\cdot \Phi(k_\s^{-1})$, respectively.
The source and target maps $\Lambda\to K$ are the maps
$(k_\tg,g,k_\s)\mapsto k_\s$ and $(k_\tg,g,k_\s)\mapsto k_\tg$, respectively.  The
composition over $G$ is
\[(k_\tg',\Phi(k_\tg)\cdot g\cdot \Phi(k_\s^{-1}),k_\s')\cdot_G(k_\tg,g,k_\s)=(k_\tg'k_\tg, g, k_\s'k_\s)
\]
and the composition over $K$ is
\[ (k_3,g', k_2)\cdot_K(k_2,g,k_1)=(k_3,g'g,k_1).
\]
The elements of $\Lambda$ are pictured 
\begin{equation*}
			\begin{tikzpicture}
				\coordinate (A) at (0,0) ;
				\coordinate (B) at (0,1) ;
				\coordinate (C) at (1,0) ;
				\coordinate (D) at (1,1) ;

				\draw[-] (A) to node[left ]{$k_\tg$} (B);
				\draw[-] (C) to node[right]{$k_\s$} (D);
				\draw[-] (C) to node[below]{$g$} (A);
				\draw[-] (D) to node[above]{} (B);
\node at (0.5,0.5) {};
			\end{tikzpicture}
		\end{equation*}
		with top edge $\Phi(k_\tg)\cdot g\cdot \Phi(k_\s^{-1})$. For the bar construction later on, however, elements of $\Lambda$ are understood as squares 
		\begin{equation*}
			\begin{tikzpicture}
				\coordinate (A) at (0,0) ;
				\coordinate (B) at (0,1) ;
				\coordinate (C) at (1,0) ;
				\coordinate (D) at (1,1) ;

				\draw[-] (A) to node[left ]{$k_\tg$} (B);
				\draw[-] (C) to node[right]{$k_\s$} (D);
				\draw[-] (C) to node[below]{} (A);
				\draw[-] (D) to node[above]{$g$} (B);
\node at (0.5,0.5) {};
			\end{tikzpicture}
		\end{equation*}
		with $\s(g)=\tg(k_\s)$, $\tg(g)=\tg(k_\tg)$ and with bottom edge $\Phi(k_\tg\inv)\cdot g\cdot \Phi(k_\s)$.

The core of $(\Lambda,K,G,M)$ consists of elements $(k,1^G_{\s(k)},1^K_{\s(k)})$
\begin{equation*}
			\begin{tikzpicture}
				\coordinate (A) at (0,0) ;
				\coordinate (B) at (0,1) ;
				\coordinate (C) at (1,0) ;
				\coordinate (D) at (1,1) ;

				\draw[-] (A) to node[left ]{$k$} (B);
				\draw[-] (C) to node[right]{$1^K_m$} (D);
				\draw[-] (C) to node[below]{$1^G_m$} (A);
				\draw[-] (D) to node[above]{} (B);
\node at (0.5,0.5) {};
			\end{tikzpicture}
		\end{equation*}
with multiplication of $k,k'\in K$ with $\s(k)=\tg(k')$ given by filling the top right and bottom left
squares in the diagram below and multiplying all obtained squares
together      
      \begin{equation*}
	\begin{tikzpicture}[x=-2.6cm,y=-2.6cm]
			\arrowsquare[0,0,1,1,${(1^K_{\s(k)}, \Phi(k'), 1^K_{\s(k')})}$,tlbr]
			\arrowsquare[1,0,1,1,${(k, 1^G_{\s(k)}, 1^K_{\s(k)})}$,tbr]
			\arrowsquare[0,1,1,1,${(k', 1^G_{\s(k')}, 1^K_{\s(k')})}$,lbr]
			\arrowsquare[1,1,1,1,${(k', 1^G_{\s(k')}, k')}$,br]
	\end{tikzpicture}
\end{equation*}
hence yielding
    \[ (k,1^G_{\s(k)},1^K_{\s(k)})\cdot(k', 1^G_{\s(k')}, 1^K_{\s(k')})=(kk', 1^G_{\s(k')}, 1^K_{\s(k')}).
      \]
      As a consequence, the core (Lie) groupoid is isomorphic to
      $K\rr M$ as a (Lie) groupoid.  Note that $\Lambda\rr G$ is the
      action groupoid of the (Lie) groupoid action of
      $K\times K\rr M\times M$ on $(\tg,s)\colon G\to M\times M$,
      $(k_\tg,k_\s)\cdot g=\Phi(k_\tg)g\Phi(k_\s^{-1})$.

\subsection{Comma Lie $2$-groupoids}
Artin and Mazur's bar construction \cite{MeTa11} applied to the nerve of a comma double Lie groupoid yields a \textbf{comma Lie $2$-groupoid} $W_\bullet\Lambda$, which is defined as follows.

\begin{proposition}\label{prop:comma-groupoids-bar-construction-layer-structure}
	Let $\Lambda$ be the comma Lie groupoid of $G \rr M$, $K \rr M$ and $\Phi\colon K \to G$.	
	Then for $r>0$, the  $r$-th layer of $W_\bullet\Lambda$ is given by
	\begin{equation*}
		\left\{\left.
			\begin{pmatrix}
				g_1 & g_2 & \dots & g_{r-1} & g_r \\
				k_{1,1} & k_{1,2} & \dots & k_{1,r-1} & k_{1,r} \\
				& k_{2,2} & \dots & k_{2,r-1} & k_{2,r} \\
				& & \ddots & \vdots & \vdots \\
				& & & k_{r-1, r-1} & k_{r-1,r}\\
				& & & & k_{r,r}
			\end{pmatrix}\in G^{(r)} \times K^{(1)} \times K^{(2)}\dots \times K^{(r)}\,
		\right|
		\,\begin{aligned}
			\tg(k_{1,j}) &= \s(g_j) && \forall\, 1 \leq j \leq r
		\end{aligned}
		\right\}
	\end{equation*}
	as a subset of $G^{(r)} \times K^{(1)} \times K^{(2)}\dots \times K^{(r)}$.
	The $K$-factors of the cartesian product correspond to the columns of the matrix.
	Since $\s$ and $\tg$ are smooth submersions, $W_r\Lambda$ is an embedded submanifold of the product 
	$G^{(r)} \times K^{(1)} \times K^{(2)}\dots \times K^{(r)}$.
\end{proposition}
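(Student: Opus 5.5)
The plan is to write down an explicit smooth bijection between $W_r\Lambda$, as given by the general description of the bar construction in Section~\ref{sec:bar-construction}, and the set of matrices in the statement, and then to check that its inverse is smooth as well. Throughout I use the picture in which an element $(k_\tg,g,k_\s)\in\Lambda$ is a square with left edge $k_\tg$, right edge $k_\s$ and \emph{top} edge $g$, subject to $\tg(g)=\tg(k_\tg)$ and $\s(g)=\tg(k_\s)$. Its bottom edge is then $\Phi(k_\tg\inv)\cdot g\cdot\Phi(k_\s)$. So a square is determined by its top edge and its two vertical edges, and these can be chosen freely subject to the two endpoint conditions. In the notation of \eqref{eq:double-groupoid-w-r}, the horizontal side groupoid is $G$ and the vertical side groupoid is $K$. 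An element of $W_r\Lambda$ is therefore a tuple $(g,(\gamma_{i,j})_{1\le i\le j<r},h)$ with $g\in G$, $h\in K$ and $\gamma_{i,j}\in\Lambda$.

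First I define the map $W_r\Lambda\to G^{(r)}\times K^{(1)}\times\dots\times K^{(r)}$ as follows.
\begin{itemize}
\item The top edges give the first row: $g_1:=g$ and $g_{j+1}:=\tg_G(\gamma_{1,j})$ for $1\le j<r$.
\item The left edges of the $j$-th column of squares give the $j$-th column: $k_{i,j}:=\tg_K(\gamma_{i,j})$ for $i\le j<r$.
\item The right edges of the last column, completed by $h$, give the $r$-th column: $k_{i,r}:=\s_K(\gamma_{i,r-1})$ for $i<r$ and $k_{r,r}:=h$.
\end{itemize}

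Next I check that the image lands in the stated set. Composability of $(g_1,\dots,g_r)$ follows from $\s(g)=\tg(\tg_G(\gamma_{1,1}))$ and from the fact that horizontally adjacent top squares share a corner. Composability of the column $(k_{1,j},\dots,k_{j,j})$ follows from the vertical compatibility $\s_H(\gamma_{i,j})=\tg_H(\gamma_{i+1,j})$. Here the bottom entry $k_{j,j}$ of column $j$ composes with $k_{j+1,j+1}$ in column $j+1$ because $\s_K(\gamma_{j,j})=k_{j,j+1}$; for $j=r-1$ the condition $\tg(h)=\s(\s_H(\gamma_{r-1,r-1}))$ gives the last step. Finally, the endpoint conditions of the top squares $\gamma_{1,j}=(k_{1,j},g_{j+1},k_{1,j+1})$ give $\tg(k_{1,j})=\tg(g_{j+1})=\s(g_j)$ for $j<r$ and $\tg(k_{1,r})=\s(g_r)$, which are exactly the conditions $\tg(k_{1,j})=\s(g_j)$ in the statement.

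For the inverse, I reconstruct the squares column by column and top to bottom. Set $\gamma_{1,j}:=(k_{1,j},g_{j+1},k_{1,j+1})$. For $i\ge2$, set $\gamma_{i,j}:=(k_{i,j},b_{i-1,j},k_{i,j+1})$, where $b_{i-1,j}=\Phi(k_{i-1,j}\inv)\,g'\,\Phi(k_{i-1,j+1})$ is the bottom edge of the square above and $g'$ is that square's top edge. Then set $g:=g_1$ and $h:=k_{r,r}$. The endpoint conditions for these new squares hold automatically, since $\tg(b_{i-1,j})=\s(k_{i-1,j})=\tg(k_{i,j})$ by composability in the columns, and similarly on the right. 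The compatibilities $\s_G(\gamma_{i,j})=\tg_G(\gamma_{i+1,j})$ hold by construction. So the two maps are mutually inverse bijections, and both are given by compositions of multiplication, inversion and $\Phi$, hence smooth.

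The only delicate point, and the step I expect to be the main obstacle, is the bookkeeping of indices along the diagonal and at the last column: one must check that no condition beyond $\tg(k_{1,j})=\s(g_j)$ survives, that is, that all lower squares impose nothing new. The embedded-submanifold claim then follows because the defining conditions are preimages under the submersions $\s$ and $\tg$ in the respective factors, i.e.\ a fibre product along submersions.
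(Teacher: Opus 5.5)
Your proposal is correct and follows the paper's argument: you record the top row of $G$-edges together with all vertical $K$-edges, and recover each lower square from the bottom edge of the square above it, and your recursion for $b_{i-1,j}$ unwinds exactly to the paper's formula for $g_{i,j}$ (you also make the inverse map and its smoothness explicit, which the paper leaves implicit). One slip in the well-definedness check: composability inside a column comes from the vertical adjacency $\s_G(\gamma_{i,j})=\tg_G(\gamma_{i+1,j})$ (stacked squares share a horizontal edge, hence its endpoints), not from an $H$-condition; and at the diagonal what must hold is $\s(k_{j,j+1})=\tg(k_{j+1,j+1})$ within column $j+1$ (automatic for $j+1<r$, and given by the condition on $h$ for $j+1=r$), whereas $k_{j,j}$ and $k_{j+1,j+1}$ lie in different factors and need not be composable.
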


The proof of this proposition follows from the structure of $\Lambda$. If \begin{equation*}
		\begin{tikzpicture}[y=-1.3cm, x=1.3cm]
			\node at (0.5,-0.4) {$g_1$};
			\draw[->] (1,0) to (0,0);

			\arrowsquare[1,0,1,1,$\lambda_{1,1}$,tlbr]

			\arrowsquare[2,0,1,1,$\lambda_{1,2}$,tlr]
			\arrowsquare[2,1,1,1,$\lambda_{2,2}$,tlbr]

			\node at (3.5,1) {$\dots$};
			\arrowsquare[4,0,1,1,$\lambda_{1,r-1}$,tlr]
			\arrowsquare[4,1,1,1,$\lambda_{2,r-1}$,tlbr]

			\node at (3.5,2.5) {$\ddots$};
			\node at (4.5,2.5) {$\vdots$};
			\arrowsquare[4,3,1,1,$\lambda_{r-1,r-1}$,tlbr]
		
			\node[text width=1cm] at (5.6, 4.5) {$k_{r,r}$};
			\draw[->] (5,5) to (5,4);
		\end{tikzpicture}
	\end{equation*}
	is a simplex in $W_r\Lambda$, 
	where $\lambda_{1,j}=(k_{1,j}, g_{j+1}, k_{1,j+1})$ equals
	\begin{equation*}
	\begin{tikzpicture}
				\coordinate (A) at (0,0) ;
				\coordinate (B) at (0,1) ;
				\coordinate (C) at (1,0) ;
				\coordinate (D) at (1,1) ;

				\draw[-] (A) to node[left ]{$k_{1,j}$} (B);
				\draw[-] (C) to node[right]{$k_{1,j+1}$} (D);
				\draw[-] (C) to node[below]{} (A);
				\draw[-] (D) to node[above]{$g_{j+1}$} (B);
\node at (0.5,0.5) {};
			\end{tikzpicture}
	\end{equation*}
	for $j=1, \ldots, r-1$, then the squares in the lower lines are completely determined by their top edges and their left sides.
	Hence, for $i\geq 2$ and $i\leq j\leq r-1$, 
	$\lambda_{i,j}$ equals 
\begin{equation*}
	\begin{tikzpicture}
				\coordinate (A) at (0,0) ;
				\coordinate (B) at (0,1) ;
				\coordinate (C) at (1,0) ;
				\coordinate (D) at (1,1) ;

				\draw[-] (A) to node[left ]{$k_{i,j}$} (B);
				\draw[-] (C) to node[right]{$k_{i,j+1}$} (D);
				\draw[-] (C) to node[below]{} (A);
				\draw[-] (D) to node[above]{$g_{i,j}$} (B);
\node at (0.5,0.5) {};
			\end{tikzpicture}
	\end{equation*}
	with 
	\begin{equation*}
		g_{i,j}
		\coloneqq \left(
			\prod_{s=1}^{i-1} \Phi(k_{s,j})
		\right)^{-1} \cdot
		g_{j+1}
		\cdot \left(
			\prod_{s=1}^{i-1} \Phi(k_{s,j+1})
		\right),
	\end{equation*}
where the products are ordered from left to right by the indices.

\bigskip

Before moving on to the face maps of the bar manifold, consider a few layers explicitly:
The $0$-th layer is as always
	$W_0\Lambda
	= M$
and the first layer is 
	$W_1\Lambda
	= G \lrtimes{\s}{\tg} K$,
which corresponds to the set of partial squares
\begin{equation*}
	\begin{tikzpicture}[y=-1cm]
		\node at (0.5, -0.4){$g_1$};
		\draw[->] (1,0) to (0,0); 
		\draw[->] (1,1) to (1,0); 
		\node[text width=0.8cm] at (1.5, 0.5) {$k_{1,1}$};
	\end{tikzpicture}
\end{equation*}
which are written as $(2 \times 1)$-matrices
\begin{equation*}
	\left(
		\begin{matrix}
			g_1
			\\
			k_{1,1}
		\end{matrix}
	\right)\text{,}
\end{equation*}
with $\s(g_1) = \tg(k_{1,1}) \in M$.
The second layer consists of matrices
\begin{equation*}
	\begin{pmatrix}
			g_1 & g_2 \\
			k_{1,1} & k_{1,2} \\
			& k_{2,2}
	\end{pmatrix}
	\qquad
	\text{ with }
	\qquad
	\begin{aligned}
	&g_1,g_2\in G, k_{1,1}, k_{1,2}, k_{2,2}\in K\\
		&\s(g_1) = \tg(g_2)\text{, }
		 \s(k_{1,2}) = \tg(k_{2,2})\text{, }
		\\
		&\tg(k_{1,1}) = \s(g_1)\text{, }
		 \tg(k_{1,2}) = \s(g_2).
	\end{aligned}
\end{equation*}
A simplex in $W_3\Lambda$ is a matrix 
\begin{equation*}
	\left(
		\begin{matrix}
			g_1 & g_2 & g_3\\
			k_{1,1} & k_{1,2} & k_{1,3} \\
			& k_{2,2} & k_{2,3} \\
			& & k_{3,3}
		\end{matrix}
	\right)
	\quad
	\text{ with }
	\quad
	\begin{aligned}
	\s(g_1) &= \tg(g_2),
	& \s(g_2) &= \tg(g_3),
	& &
	\\
	\s(k_{1,2}) &= \tg(k_{2,2}),
	& \s(k_{1,3}) &= \tg(k_{2,3}),
	& \s(k_{2,3}) &= \tg(k_{3,3}),
	\\
	\tg(k_{1,1}) &= \s(g_1),
	& \tg(k_{1,2}) &= \s(g_2),
	& \tg(k_{1,3}) &= \s(g_3).
	\end{aligned}
\end{equation*}
\bigskip

For $r>1$ and $0 < i < r$, the $i$-th face map $f_i^r\colon W_r\Lambda\to W_{r-1}\Lambda$ is the map sending 
\begin{equation*}\label{eq:comma-groupoid-M-r}
	 \begin{pmatrix}
		g_1 & g_2 & \dots & g_{r-1} & g_r \\
		k_{1,1} & k_{1,2} & \dots & k_{ 1,r-1} & k_{1,r} \\
		 & k_{2,2} & \dots & k_{ 2,r-1} & k_{2,r} \\
		& & \ddots & \vdots & \vdots \\
		& & & k_{ r-1,r-1} & k_{r-1,r}\\
		& & & & k_{r,r}
	\end{pmatrix}
\end{equation*}
to
\begin{equation*}
	\begin{pmatrix}
		g_1 & g_2 & \dots & g_{i-1} & g_{i} \cdot g_{i+1} & g_{i+2} & \dots & g_{r-1} & g_r \\
		k_{1,1} & k_{1,2} & \dots & k_{1,i-1} & k_{1,i+1} & k_{1,i+2} & \dots & k_{1,r-1} & k_{1,r} \\
		& k_{2,2} & \dots & k_{2,i-1} & k_{2,i+1} & k_{2,i+2} & \dots & k_{2,r-1} & k_{2,r} \\
		& & \ddots & \vdots & \vdots & \vdots & & \vdots & \vdots \\
		& & & k_{i-1,i-1} & k_{i-1,i+1} & k_{i-1,i+2} & \dots & k_{i-1,r-1} & k_{i-1,r} \\
		& & &
		&k_{i,i+1} \cdot k_{i+1,i+1}
		& k_{i,i+2} \cdot k_{i+1,i+2}
		&\dots
		& k_{i,r-1} \cdot k_{i+1,r-1}
		& k_{i,r} \cdot k_{i+1,r} \\
		& & & & & k_{i+2,i+2} & \dots & k_{i+2,r-1} & k_{i+2,r} \\
		& & & & & & \ddots & \vdots & \vdots \\
		& & & & & & & k_{r-1,r-1} & k_{r-1,r} \\
		& & & & & & & & k_{r,r}
	\end{pmatrix}.
\end{equation*}
For $i=0$ and $i=r$ the maps $f_0^r, f_r^r\colon W_r\Lambda\to W_{r-1}\Lambda$ delete the first row and the last column of the structure respectively.
The same element is mapped here by $f_r^r$  to
\begin{equation*}
	\begin{pmatrix}
		g_1 & g_2 & \dots & g_{r-2} & g_{r-1} \\
		k_{1,1} & k_{1,2}& \dots & k_{1,r-2} & k_{1,r-1} \\
		& k_{2,2}& \dots & k_{2,r-2} & k_{2,r-1} \\
		& & \ddots & \vdots & \vdots \\
		& & & k_{r-2,r-2} & k_{r-2,r-1} \\
		& & & & k_{r-1,r-1} \\
	\end{pmatrix}
	\in W_{r-1}\Lambda
\end{equation*}
and by $f_0^r$ to 
\begin{equation*}
	\begin{pmatrix}
		\bar{g}_2 & \bar{g}_3 & \dots & \bar{g}_{r-1} & \bar{g}_r \\
		& k_{2,2} & \dots & k_{2,r-1} & k_{2,r} \\
		& & \ddots & \vdots & \vdots \\
		& & & k_{r-1,r-1} & k_{r-1,r}\\
		& & & & k_{r,r}
	\end{pmatrix}
	\in W_{r-1}\Lambda,
\end{equation*}
where $\bar{g}_j = \Phi(k_{1,j-1})^{-1} g_j \Phi(k_{1,j}) \in G$ for $1 < j \leq r$.

In particular
\begin{align*}
	f^1_0\colon &W_1\Lambda \to W_0\Lambda=M, \quad \begin{pmatrix} g_1 \\ k_{1,1} \end{pmatrix} \mapsto \s(k_{1,1}) \quad \text{ and}\\
	f^1_1\colon &W_1\Lambda \to W_0\Lambda=M, \quad \begin{pmatrix}
		g_1 \\ k_{1,1}
	\end{pmatrix} \mapsto \tg(g_1).
\end{align*}
For the next level, the images of the $2$-simplex
under the maps $f^2_0, f^2_1, f^2_2\colon W_2\Lambda \to W_1\Lambda$ are
\begin{equation*}
	\begin{tikzpicture}[y=-1cm]
	\begin{scope}[shift={(0,0)}]
		\draw[dotted] (1,0) to (0,0);
		\draw[dotted] (2,0) to (1,0);
		\draw[dotted] (1,1) to (1,0);
		\draw[dotted] (2,1) to (2,0);
		\node at (1.5,0.6) {$\Phi(k_{1,1})^{-1}\cdot g_2\cdot \Phi(k_{1,2})$};
		\draw[->] (2,1) to (1,1);
		\node at (2.4,1.5) {$k_{2,2}$};
		\draw[->] (2,2) to (2,1);
		\node at (1, 2.5) {$f_0^2$};
	\end{scope}
	\begin{scope}[shift={(5,0)}]
		\node at (1,-0.4) {$g_1 \cdot g_2$};
		\draw[->] (2,0) to (0,0);
		\draw[dotted] (1,1) to (1,0);
		\draw[dotted] (2,1) to (1,1);
		\node[text width=1.5cm] at (3,1) {$k_{1,2} \cdot k_{2,2}$};
		\draw[->] (2,2) to (2,0);
		\node at (1, 2.5) {$f_1^2$};
	\end{scope}
	\begin{scope}[shift={(10,0)}]
		\node at (0.5, -0.4) {$g_1$};
		\draw[->] (1,0) to (0,0);
		\draw[dotted] (2,0) to (1,0);
		\node at (1.4, 0.5) {$k_{1,1}$};
		\draw[->] (1,1) to (1,0);
		\draw[dotted] (2,1) to (2,0);
		\draw[dotted] (2,1) to (1,1);
		\draw[dotted] (2,2) to (2,1);
		\node at (1, 2.5) {$f_2^2$};
	\end{scope}
	\end{tikzpicture}
\end{equation*}
That is, 
\begin{align*}
	f^2_0\colon W_2\Lambda \to W_1\Lambda,\,
	&\begin{pmatrix}
		g_1 & g_2 \\
		k_{1,1} & k_{1,2} \\
		& k_{2,2}
	\end{pmatrix}
	\mapsto
	\begin{pmatrix}
		\scriptstyle \Phi(k_{1,1})^{-1} g_2 \Phi(k_{1,2}) \\
		k_{2,2}
	\end{pmatrix},
	\\
	f^2_1\colon W_2\Lambda \to W_1\Lambda,\,
	&\begin{pmatrix}
		g_1 & g_2 \\
		k_{1,1} & k_{1,2} \\
		& k_{2,2}
	\end{pmatrix}
	\mapsto
	\begin{pmatrix}
		g_1 \cdot g_2 \\
		k_{1,2} \cdot k_{2,2}
	\end{pmatrix},
	\\
	f^2_2\colon W_2\Lambda \to W_1\Lambda,\,
	&\begin{pmatrix}
		g_1 & g_2 \\
		k_{1,1} & k_{1,2} \\
		& k_{2,2}
	\end{pmatrix}
	\mapsto
	\begin{pmatrix}
		g_1\\
		k_{1,1}
	\end{pmatrix}
\end{align*}
and 
\begin{align*}
	f^3_0\colon W_3\Lambda \to W_2\Lambda,\quad 
	&\begin{pmatrix}
		g_1 & g_2 & g_3 \\
		k_{1,1} & k_{1,2} & k_{1,3} \\
		& k_{2,2} & k_{2,3} \\
		& & k_{3,3}
	\end{pmatrix}
	\mapsto
	\begin{pmatrix}
		\scriptstyle \Phi(k_{1,1})^{-1} g_2 \Phi(k_{1,2}) &
		\scriptstyle \Phi(k_{1,2})^{-1} g_3 \Phi(k_{1,3}) \\
		k_{2,2} & k_{2,3} \\
		& k_{3,3}
	\end{pmatrix}\text{ ,}
	\\
	f^3_1\colon W_3\Lambda \to W_2\Lambda,\quad 
	&\begin{pmatrix}
		g_1 & g_2 & g_3 \\
		k_{1,1} & k_{1,2} & k_{1,3} \\
		& k_{2,2} & k_{2,3} \\
		& & k_{3,3}
	\end{pmatrix}
	\mapsto
	\begin{pmatrix}
		g_1 \cdot g_2 & g_3 \\
		k_{1,2} \cdot k_{2,2} & k_{1,3} \cdot k_{2,3} \\
		& k_{3,3} \\
	\end{pmatrix},
	\\
	f^3_2\colon W_3\Lambda \to W_2\Lambda,\quad 
	&\begin{pmatrix}
		g_1 & g_2 & g_3 \\
		k_{1,1} & k_{1,2} & k_{1,3} \\
		& k_{2,2} & k_{2,3} \\
		& & k_{3,3}
	\end{pmatrix}
	\mapsto
	\begin{pmatrix}
		g_1 & g_2 \cdot g_3 \\
		k_{1,1} & k_{1,3} \\
		& k_{2,3} \cdot k_{3,3}
	\end{pmatrix},
	\\
	f^3_3\colon W_3\Lambda \to W_2\Lambda,\quad 
	&\begin{pmatrix}
		g_1 & g_2 & g_3 \\
		k_{1,1} & k_{1,2} & k_{1,3} \\
		& k_{2,2} & k_{2,3} \\
		& & k_{3,3}
	\end{pmatrix}
	\mapsto
	\begin{pmatrix}
		g_1 & g_2 \\
		k_{1,1} & k_{1,2} \\
		& k_{2,2}
	\end{pmatrix}.
\end{align*}
The reader is invited to check using these formulas that the horn maps $\lambda_{3,i}$ are diffeomorphisms for $i=0,1,2,3$, 
see also \cite[Theorem 4.5]{MeTa11}. Similarly, higher faces are not particularly relevant either as they yield diffeomorphic horn maps by \cite[Theorem 4.5]{MeTa11}.
For $i=0,1,2$, the $(2,i)$-horn maps are given by 
\begin{align*}
	\lambda_{2,0}&=(f^2_1, f^2_2)\colon \begin{pmatrix}
		g_1 & g_2 \\
		k_{1,1} & k_{1,2} \\
		& k_{2,2}
	\end{pmatrix} \mapsto
	\left(
		\begin{pmatrix}
			g_1 \cdot g_2 \\
			k_{1,2} \cdot k_{2,2}
		\end{pmatrix},
		\begin{pmatrix}
			g_1 \\
			k_{1,1}
		\end{pmatrix}
	\right),\\
	\lambda_{2,1}&=(f^2_0, f^2_2)\colon \begin{pmatrix}
		g_1 & g_2 \\
		k_{1,1} & k_{1,2} \\
		& k_{2,2}
	\end{pmatrix} \mapsto
	\left(
		\begin{pmatrix}
			\Phi(k_{1,1})\inv\cdot g_2\cdot \Phi(k_{1,2}) \\
			 k_{2,2}
		\end{pmatrix},
		\begin{pmatrix}
			g_1 \\
			k_{1,1}
		\end{pmatrix}
	\right), \\
\text{ and }\quad  	\lambda_{2,2}&=(f^2_0, f^2_1)\colon \begin{pmatrix}
		g_1 & g_2 \\
		k_{1,1} & k_{1,2} \\
		& k_{2,2}
	\end{pmatrix} \mapsto
	\left(
		\begin{pmatrix}
			\Phi(k_{1,1})\inv\cdot g_2\cdot \Phi(k_{1,2}) \\
			 k_{2,2}
		\end{pmatrix},
		\begin{pmatrix}
			g_1\cdot g_2 \\
			k_{1,2}\cdot k_{2,2}
		\end{pmatrix}
	\right).
\end{align*}
These maps are not diffeomorphisms, but it is easy to see that they are surjective submersions.

\bigskip

\bigskip
The degeneracy maps of $W_\bullet \Lambda$ are given as follows.
For $0 < i < r$, the $i$-th degeneracy map of the $r$-th layer, $\delta_i^r\colon W_r\Lambda \to W_{r+1}\Lambda$ maps  a simplex as described and denoted in Proposition \ref{prop:comma-groupoids-bar-construction-layer-structure} to 
\begin{equation*}
	\begin{pmatrix}
		g_1 & g_2 & \dots & g_{i} & 1^G_{\s(g_i)} & g_{i+1} & \dots & g_{r-1} & g_r \\
		k_{1,1} & k_{1,2} & \dots & k_{1,i} & k_{1,i} & k_{1,i+1} & \dots & k_{1,r-1} & k_{1,r} \\
		 & k_{2,2} & \dots & k_{2,i} &k_{2,i} & k_{2,i+1} & \dots & k_{2,r-1} & k_{2,r} \\
		& & \ddots & \vdots & \vdots & \vdots & & \vdots & \vdots \\
		& & & k_{i,i} & k_{i,i} & k_{i,i+1} & \dots & k_{i,r-1}& k_{i,r}\\
		& & & & 1^K_{\s(k_{i,i})} & 1^K_{\s(k_{i,i+1})} & \dots &1^K_{\s(k_{i,r-1})} & 1^K_{\s(k_{i,r})} \\
		& & & & & k_{i+1,i+1} & \dots & k_{i+1,r-1} & k_{i+1,r}\\
		& & & & & & \ddots & \vdots & \vdots \\
		& & & & & & & k_{r-1,r-1} & k_{r-1,r} \\
		& & & & & & & & k_{r,r}
	\end{pmatrix}.
\end{equation*}

For $i=0$, the same element of $W_r\Lambda$ is sent by $\delta_0^r$ to
\begin{equation*}
	 \begin{pmatrix}
		{1^G_{\tg(g_1)}} & g_1 & g_2 & \dots & g_{r-1} & g_r \\
		{1^K_{\tg(g_1)}} & {1^K_{\tg(g_2)}} & {1^K_{\tg(g_3)}} & \dots & {1^K_{\tg(g_r)}} & {1^K_{\s(g_r)}}\\
		& k_{1,1} & k_{1,2} & \dots & k_{ 1,r-1} & k_{1,r} \\
		&  & k_{2,2} & \dots & k_{ 2,r-1} & k_{2,r} \\
		& & & \ddots & \vdots & \vdots \\
		& & & & k_{ r-1,r-1} & k_{r-1,r}\\
		& & & & & k_{r,r}
	\end{pmatrix} \in W_{r+1}\Lambda
\end{equation*}
and by $\delta_r^r$ to 
\begin{equation*}
	 \begin{pmatrix}
		g_1 & g_2 & \dots & g_{r-1} & g_r &{ 1^G_{\s(g_r)} }\\
		k_{1,1} & k_{1,2} & \dots & k_{ 1,r-1} & k_{1,r} &{k_{1,r}}\\
		& k_{2,2} & \dots & k_{ 2,r-1} & k_{2,r} &{ k_{2,r}}\\
		& & \ddots & \vdots & \vdots &{\vdots}\\
		& & & k_{ r-1,r-1} & k_{r-1,r} &{ k_{r-1,r}}\\
		& & & & k_{r,r} &{ k_{r,r} }\\
		& & & & &{ 1^K_{\s(k_{r,r})} }\\
	\end{pmatrix} \in W_{r+1}\Lambda.
\end{equation*}

\section{The bar construction applied to transitive double Lie groupoids}
This section briefly recalls the construction of transitive double Lie groupoids from their core diagrams and describes in detail the \emph{transitive Lie $2$-groupoids} produced by applying the bar construction to this class of examples.
\subsection{Transitive double Lie groupoids}
Consider a transitive core diagram as in \eqref{trans_core_d}.  Construct the comma double Lie groupoid $\Lambda:=(\da_G,\da_G)$
    \[\begin{tikzcd}
	\Lambda & G \\
	K & M
	\arrow[shift left=1, from=1-2, to=2-2]
	\arrow[shift right=1, from=1-2, to=2-2]
	\arrow[shift right=1, from=2-1, to=2-2]
	\arrow[shift left=1, from=2-1, to=2-2]
	\arrow[shift right=1, from=1-1, to=1-2]
	\arrow[shift right=1, from=1-1, to=2-1]
	\arrow[shift left=1, from=1-1, to=2-1]
	\arrow[shift left=1, from=1-1, to=1-2]
      \end{tikzcd}\]
    with core $K$.
    Next, $G\to M$ acts on $K^G:=\ker(\da_H)\subseteq K$ via
    \[ \rho\colon G\lrtimes{\s}{\tg}  K^G\to K^G, \qquad \rho(g)(\kappa)=k\cdot \kappa \cdot k\inv
    \]
    for any $k\in K$ such that $\da_G(k)=g$.
    Consider the closed, embedded, wide and normal Lie subgroupoid
    \begin{equation*}\label{def_N}
      N:=\{(\kappa_2,g,\kappa_1)\in K^G\times G\times K^G\mid \rho(g)(\kappa_1)=\kappa_2\}
    \end{equation*}
    of $\Lambda\rr G$. The quotient $\Theta=\Lambda/N$ then has a Lie
    groupoid structure over $G$. The elements of $\Theta$ are classes
    \[ \langle k_2, g, k_1\rangle=\{(k_2\kappa_2, g, k_1\kappa_1)\mid (\kappa_2,g,\kappa_1)\in N\}
    \]
    with $(k_2,g,k_1)\in\Lambda$.
    Set $\s_H,\tg_H\colon \Lambda/N\to H$ 
    \[ \s_H\langle k_2, g, k_1\rangle=\da_H(k_1), \qquad \tg_H\langle k_2, g, k_1\rangle=\da_H(k_2),
    \]
    and a partial multiplication $\cdot_H\colon \Theta\lrtimes{\s_H}{\tg_H} \Theta\to \Theta$ by 
    \[ \langle k_2', g', k_1'\rangle\cdot_H\langle k_2, g,
      k_1\rangle=\langle k_2', g', k_1'\rangle\cdot_H\langle k_1',
      g, k_1\rho(g\inv)(k_2\inv k_1')\rangle=\langle k_2', g'g, k_1\rho(g\inv)(k_2\inv k_1')\rangle.
    \]
    It is easy to
    check that $\Theta=\Lambda/N\rr H$ becomes a groupoid with these structure maps and with the inversion
    \[ \langle k_2, g, k_1\rangle\mapsto \langle k_1, g\inv,k_2\rangle
    \]
    and the unit inclusion
    \[ h\mapsto \langle k, 1_{\s(h)},k\rangle
    \]
    for any $k\in K$ such that $\da_H(k)=h$.
    Since those structure maps are defined such that
    \[\begin{tikzcd}
	\Lambda & {\Theta=\Lambda/N} \\
	K & H
	\arrow[shift left=1, from=1-2, to=2-2]
	\arrow[shift right=1, from=1-2, to=2-2]
	\arrow["{\partial_H}"', from=2-1, to=2-2]
	\arrow["\pi", from=1-1, to=1-2]
	\arrow[shift right=1, from=1-1, to=2-1]
	\arrow[shift left=1, from=1-1, to=2-1]
      \end{tikzcd}\] is a morphism of groupoids and the projections
    $\pi\colon \Lambda\to\Theta$ and $\da_H\colon K\to H$ are
    surjective submersions, it is easy to check that $\Theta\rr H$ is
    a Lie groupoid. The interchange law and the surjectivity of the
    double source map are also easily deduced from the one in
    $\Lambda=(\da_G,\da_G)$, and $(\Theta, G,H,M)$ is a double Lie
    groupoid. Its core consists of classes
    $\langle k, 1_{\s(k)},\kappa\rangle=\langle k\kappa\inv,
    1_{\s(k)},1_{\s(k)}\rangle$, and is obviously isomorphic to $K$ as
    a Lie groupoid. The core diagram of $(\Theta, G,H,M)$ is hence
    again \[\begin{tikzcd}
	K & G \\
	H & M \arrow[shift left=1, from=1-2, to=2-2] \arrow[shift
        right=1, from=1-2, to=2-2] \arrow[shift right=1, from=2-1,
        to=2-2] \arrow[shift left=1, from=2-1, to=2-2] \arrow[shift
        right=1, from=1-1, to=2-2] \arrow[shift left=1, from=1-1,
        to=2-2] \arrow["{\partial_G}", from=1-1, to=1-2]
        \arrow["{\partial_H}"', from=1-1, to=2-1]
      \end{tikzcd}\]
    by construction.
 Conversely, given a transitive double Lie groupoid
     \[\begin{tikzcd}
	S& G \\
	H & M
	\arrow[shift left=1, from=1-2, to=2-2]
	\arrow[shift right=1, from=1-2, to=2-2]
	\arrow[shift right=1, from=2-1, to=2-2]
	\arrow[shift left=1, from=2-1, to=2-2]
	\arrow[shift right=1, from=1-1, to=1-2]
	\arrow[shift right=1, from=1-1, to=2-1]
	\arrow[shift left=1, from=1-1, to=2-1]
	\arrow[shift left=1, from=1-1, to=1-2]
      \end{tikzcd}\] with core $K$ and core diagram as in \eqref{trans_core_d}, set $\Psi_S\colon \Theta\to S$,
    $\langle k_2, g, k_1\rangle\mapsto k_2\cdot_H 1^S_g\cdot_Hk_1^{-1}\in S$. Then $\Psi_S$ is an isomorphism of double Lie groupoids,
    see \cite{BrMa92,JoMa21}.

    A morphism $\Phi\colon \Gamma_1\to \Gamma_2$ of double Lie
    groupoids with side morphisms $\phi_G\colon G_1\to G_2$,
    $\phi_H\colon H_1\to H_2$ and with core morphism
    $\phi_K\colon K_1\to K_2$, all three above the identity on $M$, induces a morphism of the
    corresponding core diagrams as in the following diagram.
    \begin{equation}\label{cor_mor_gpd}\begin{tikzcd}
	{K_1} &&&& {K_2} \\
	&& {G_1} &&&& {G_2} \\
	{H_1} &&&& {H_2} \\
	&& M &&&& M
	\arrow["{\phi_K}", from=1-1, to=1-5]
	\arrow["{\phi_H}"{pos=0.7}, from=3-1, to=3-5]
	\arrow["{\partial_{H_1}}"{description}, from=1-1, to=3-1]
	\arrow["{\partial_{H_2}}"{description, pos=0.7}, from=1-5, to=3-5]
	\arrow["{\partial_{G_1}}"{description}, from=1-1, to=2-3]
	\arrow["{\partial_{G_2}}"{description}, from=1-5, to=2-7]
	\arrow[shift left=1, from=3-1, to=4-3]
	\arrow[shift right=1, from=3-1, to=4-3]
	\arrow[shift right=1, from=3-5, to=4-7]
	\arrow[shift left=1, from=3-5, to=4-7]
	\arrow[shift right=1, from=2-7, to=4-7]
	\arrow[shift left=1, from=2-7, to=4-7]
	\arrow[shift right=1, from=2-3, to=4-3]
	\arrow[shift left=1, from=2-3, to=4-3]
	\arrow[shift right=1, from=1-1, to=4-3]
	\arrow[shift left=1, from=1-1, to=4-3]
	\arrow[shift right=1, from=1-5, to=4-7]
	\arrow[shift left=1, from=1-5, to=4-7]
	\arrow["{\operatorname{id}_M}"{description}, from=4-3, to=4-7]
	\arrow["{\phi_G}"{pos=0.3}, from=2-3, to=2-7]
      \end{tikzcd}\end{equation}  Consider conversely a morphism of
    transitive core diagrams (of Lie groupoids) as in
    \eqref{cor_mor_gpd}.  Let $\Lambda_i$ be the total space of the
    comma double Lie groupoid $(\da_{G_i},\da_{G_i})$ for $i=1,2$ and
    set $\Phi\colon \Lambda_1\to \Lambda_2$,
    $\Phi(k_1,g,k_2)=(\phi_K(k_1),\phi_G(g),\phi_K(k_2))$. The map
    $\Phi$ is obviously a morphism of double Lie groupoids, and a
    computation shows $\Phi(N_1)=N_2$ for the normal subgroupoids
    $N_i$ of $\Lambda_i\rr G_i$  for
    $i=1,2$.  Therefore, it induces a morphism of the transitive
    double Lie groupoids $\overline{\Phi}\colon \Theta_1=\Lambda_1/N_1\to \Lambda_2/N_2=\Theta_2$ with core
    morphism given by \eqref{cor_mor_gpd}. 
    
    This establishes an equivalence between the category of transitive core diagrams and the category of transitive double Lie groupoids, see \cite{BrMa92,JoMa21}.
    In particular, if \[\begin{tikzcd}
	K & G \\
	H & M \arrow[shift left=1, from=1-2, to=2-2] \arrow[shift
        right=1, from=1-2, to=2-2] \arrow[shift right=1, from=2-1,
        to=2-2] \arrow[shift left=1, from=2-1, to=2-2] \arrow[shift
        right=1, from=1-1, to=2-2] \arrow[shift left=1, from=1-1,
        to=2-2] \arrow["{\partial_G}", from=1-1, to=1-2]
        \arrow["{\partial_H}"', from=1-1, to=2-1]
      \end{tikzcd}\]
is the core diagram of a transitive double Lie groupoid $(\Theta, G, H, M)$, then 
\[\begin{tikzcd}
	K & G \\
	K & M \arrow[shift left=1, from=1-2, to=2-2] \arrow[shift
        right=1, from=1-2, to=2-2] \arrow[shift right=1, from=2-1,
        to=2-2] \arrow[shift left=1, from=2-1, to=2-2] \arrow[shift
        right=1, from=1-1, to=2-2] \arrow[shift left=1, from=1-1,
        to=2-2] \arrow["{\partial_G}", from=1-1, to=1-2]
        \arrow["{\id_K}"', from=1-1, to=2-1]
      \end{tikzcd}\]
      is a transitive core diagram as well, which corresponds to the (transitive) comma double Lie groupoid $\Lambda=(\partial_G,\partial_G)$, and 
        \begin{equation*}
        \begin{tikzcd}
	{K} &&&& {K} \\
	&& {G} &&&& {G} \\
	{K} &&&& {H} \\
	&& M &&&& M
	\arrow["{\id_K}", from=1-1, to=1-5]
	\arrow["{\partial_H}"{pos=0.7}, from=3-1, to=3-5]
	\arrow["{\id_K}"{description}, from=1-1, to=3-1]
	\arrow["{\partial_{H}}"{description, pos=0.7}, from=1-5, to=3-5]
	\arrow["{\partial_{G}}"{description}, from=1-1, to=2-3]
	\arrow["{\partial_{G}}"{description}, from=1-5, to=2-7]
	\arrow[shift left=1, from=3-1, to=4-3]
	\arrow[shift right=1, from=3-1, to=4-3]
	\arrow[shift right=1, from=3-5, to=4-7]
	\arrow[shift left=1, from=3-5, to=4-7]
	\arrow[shift right=1, from=2-7, to=4-7]
	\arrow[shift left=1, from=2-7, to=4-7]
	\arrow[shift right=1, from=2-3, to=4-3]
	\arrow[shift left=1, from=2-3, to=4-3]
	\arrow[shift right=1, from=1-1, to=4-3]
	\arrow[shift left=1, from=1-1, to=4-3]
	\arrow[shift right=1, from=1-5, to=4-7]
	\arrow[shift left=1, from=1-5, to=4-7]
	\arrow["{\operatorname{id}_M}"{description}, from=4-3, to=4-7]
	\arrow["{\id_G}"{pos=0.3}, from=2-3, to=2-7]
      \end{tikzcd}\end{equation*}
      is then a morphism of transitive core diagrams. The corresponding morphism of (transitive) double Lie groupoids is the surmersive smooth map $\pi\colon \Lambda\to\Theta$, $(k_1,g,k_2)\mapsto \langle k_1, g, k_2\rangle$.

	\subsection{Transitive Lie $2$-groupoids}
	
	Let $(\Theta, G, H, M)$ be a transitive double Lie groupoid with core $K$, core diagram as in \eqref{trans_core_d} and set $\Lambda:=(\partial_G,\partial_G)$. By the previous section, 
	$\Theta$ is isomorphic as a double Lie groupoid to $\Lambda/N$, and the projection $\pi\colon\Lambda\to \Theta=\Lambda/N$ is a morphism of double Lie groupoids. Denote by $W_\bullet\Theta$ the simplicial manifold obtained by applying Artin-Mazur's bar construction to $(\Theta, G, H, M)$ as in \cite{MeTa11}, and by $\overline f^r_i$ and $\overline \delta^r_i$ its faces and degeneracy maps. The faces and degeneracy maps of $W_\bullet\Lambda$ are denoted by $f^r_i$, $\delta^r_i$ as before.
The normal subgroupoid $N\rr G$ of $\Lambda \rr G$ defines as follows an equivalence relation on $W_\bullet\Lambda$.
	For $r\geq 1$ an element   \begin{equation*}
		x:=\begin{pmatrix}
			g_1 & g_2 & \dots & g_{r-1} & g_r\\
			k_{1,1} & k_{1,2} & \dots & k_{1,r-1} & k_{1,r}\\
			& k_{2,2} & \dots & k_{2,r-1} & k_{2,r}\\
			& & \ddots & \vdots & \vdots \\
			& & & k_{r-1,r-1} & k_{r-1,r} \\
			& & & & k_{r,r} \\
		\end{pmatrix}\in W_r\Lambda
	\end{equation*}
	is equivalent to 
	\[\begin{pmatrix}
			g_1 & g_2 & \dots & g_{r-1} & g_r\\
			k_{1,1}\kappa_{1,1}  & k_{1,2}  \kappa_{1,2}& \dots &  k_{1,r-1} \kappa_{1,r-1}&  k_{1,r}\kappa_{1,r}\\
			& k_{2,2} \kappa_{2,2} & \dots &  k_{2,r-1} \kappa_{2,r-1}& k_{2,r}\kappa_{2,r} \\
			& & \ddots & \vdots & \vdots \\
			& & &  k_{r-1,r-1} \kappa_{r-1,r-1}&  k_{r-1,r} \kappa_{r-1,r}\\
			& & & & k_{r,r} \kappa_{r,r} \\
		\end{pmatrix}
		\]
		for all $\kappa_{i,j} \in K^G$, $1 \leq i \leq j \leq r$, such that $\rho(g_{i,j+1})(\kappa_{i,j+1}) = \kappa_{i,j}$, where as before
		\begin{equation*}
		g_{i,j+1} =
		\left(\prod_{l=1}^i \partial_G(k_{l, j})\right)^{-1}
		\cdot
		g_{j+1} 
		\cdot
		\left(\prod_{l=1}^i \partial_G(k_{l, j+1})\right)
	\end{equation*}
	for $1\leq i\leq j<r$. 
	 Denote then by 
	\begin{equation*}
[x]:=\left\{\left.
		\begin{pmatrix}
			g_1 & g_2 & \dots & g_{r-1} & g_r\\
			 k_{1,1} \kappa_{1,1}&  k_{1,2} \kappa_{1,2}& \dots &  k_{1,r-1} \kappa_{1,r-1}& k_{1,r}\kappa_{1,r} \\
			& k_{2,2} \kappa_{2,2} & \dots & k_{2,r-1}  \kappa_{2,r-1}&  k_{2,r}\kappa_{2,r}\\
			& & \ddots & \vdots & \vdots \\
			& & & k_{r-1,r-1} \kappa_{r-1,r-1} &  k_{r-1,r}\kappa_{r-1,r} \\
			& & & &  k_{r,r}\kappa_{r,r} \\
		\end{pmatrix}
		\right|
		\begin{gathered}
		\kappa_{i,j} \in K^G\\
		\forall\, 1 \leq i \leq j \leq r\\
		\\
		\rho(g_{i,j+1})(\kappa_{i,j+1}) = \kappa_{i,j}\\
		\forall\, 1 \leq i \leq j < r		
		\end{gathered}
		\right\}
	\end{equation*}
	the equivalence class of $x$.
	The following proposition is immediate and follows from the fact that the (surmersive) double Lie groupoid morphism $\pi\colon \Lambda\to \Theta$ with sides $\partial_H$ and $\id_G$ and core $\id_K$ defines a (layerwise surmersive) morphism $\pi_\bullet\colon W_\bullet\Lambda\to W_\bullet\Theta$ of simplicial manifolds.

	\begin{proposition}
		In the situation above, $W_0\Theta=M$ and for $r > 0$ the  manifold $W_r \Theta$ is defined by 
	\begin{equation*}
		\left\{\left.
			[x]
		\,\right|\,
			x\in W_r\Lambda		\right\}
	\end{equation*}
	with the unique smooth structure such that $\pi_r\colon W_r\Lambda\to W_r\Theta$, $x\mapsto [x]$, is a smooth surjective submersion.
	The faces and degeneracy maps of $W_\bullet\Theta$ are the unique maps $\overline f^r_i$, $\overline \delta^r_i$ such that the diagrams	
	\[\begin{tikzcd}
	{W_{r+1}\Lambda} && {W_{r+1}\Theta} && {W_r\Lambda} && {W_r\Theta} \\
	{W_r\Lambda} && {W_r\Theta} && {W_{r+1}\Lambda} && {W_{r+1}\Theta}
	\arrow["{\pi_{r+1}}", from=1-1, to=1-3]
	\arrow["{f^{r+1}_i}"', from=1-1, to=2-1]
	\arrow["{\overline f^{r+1}_i}", from=1-3, to=2-3]
	\arrow["{\pi_r}", from=1-5, to=1-7]
	\arrow["{\delta^r_i}"', from=1-5, to=2-5]
	\arrow["{\overline\delta^r_i}", from=1-7, to=2-7]
	\arrow["{\pi_r}"', from=2-1, to=2-3]
	\arrow["{\pi_{r+1}}"', from=2-5, to=2-7]
\end{tikzcd}\]
all commute for $r\geq 0$ and $i$ where defined.
	\end{proposition}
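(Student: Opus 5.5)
The plan is to realise $W_r\Theta$ as a quotient of $W_r\Lambda$ by identifying it with the fibres of the layer map $\pi_r$ of the simplicial morphism $\pi_\bullet\colon W_\bullet\Lambda\to W_\bullet\Theta$. This morphism is induced, by the proposition on morphisms of double Lie groupoids in Section~\ref{sec:bar-construction}, from the double Lie groupoid morphism $\pi\colon\Lambda\to\Theta$ with sides $\id_G$ and $\partial_H$ and core $\id_K$. By that proposition, $\pi_0=\id_M$ and $\pi_r(g,(\lambda_{i,j}),k_{r,r})=(g,(\pi(\lambda_{i,j})),\partial_H(k_{r,r}))$. The whole argument then reduces to three claims:
(a) $\pi_r$ is a surjective submersion onto $W_r\Theta$ (as defined by the bar construction);
(b) its fibres are exactly the classes $[x]$;
(c) the face and degeneracy maps descend.

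For (b), I use Proposition~\ref{prop:comma-groupoids-bar-construction-layer-structure} to write $x$ as a matrix. The squares are $\lambda_{i,j}=(k_{i,j},g_{i,j+1},k_{i,j+1})$ and the last edge is $k_{r,r}$, with $g_{i,j+1}$ given by the formula above (the row-$i$ square is determined by its top edge and left side). Two elements $x,x'$ have the same image if and only if the following hold:
\begin{itemize}
\item they have the same $G$-row (since $\phi_G=\id_G$);
\item each square satisfies $\pi(\lambda'_{i,j})=\pi(\lambda_{i,j})$;
\item $\partial_H(k'_{r,r})=\partial_H(k_{r,r})$.
\end{itemize}
By definition of $\Theta=\Lambda/N$, the second condition means $k'_{i,j}=k_{i,j}\kappa_{i,j}$ and $k'_{i,j+1}=k_{i,j+1}\kappa_{i,j+1}$ with $(\kappa_{i,j},g_{i,j+1},\kappa_{i,j+1})\in N$, i.e.\ $\rho(g_{i,j+1})(\kappa_{i,j+1})=\kappa_{i,j}$.

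Two consistency points need checking. First, the $\kappa$'s attached to shared edges of adjacent squares must agree, which holds because each $k_{i,j}$ is a single edge. Second, the intermediate $g_{i,j+1}$ for $x'$ must coincide with those for $x$. This follows since $\partial_G(\kappa)=1$ for $\kappa\in K^G$ (elements of $\ker\partial_H$ commuting with $\ker\partial_G$, and $\rho$ being conjugation, gives $\partial_G(k\kappa)=\partial_G(k)$ once one notes $K^G\subseteq \ker \partial_G$ is what is meant). The last entry $k_{r,r}$ only matters through $\partial_H$, whose fibres are $k_{r,r}K^G$-cosets as $K^G=\ker\partial_H$. Hence the fibres are precisely the sets $[x]$.

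For (a), surjectivity and submersivity reduce to lifting. Given an element $(g,(\theta_{i,j}),h)$ of $W_r\Theta$, I lift $h$ through the surjective submersion $\partial_H$ to obtain $k_{r,r}$. I then lift the squares $\theta_{i,j}$ inductively from the bottom-right corner upward and leftward, using that $\pi\colon\Lambda\to\Theta$ is surmersive and that a lift of a square can be chosen with prescribed right edge $k$ above a prescribed $\partial_H(k)$. This is possible because the classes $\langle k_2,g,k_1\rangle$ allow the representative $k_1$ to be any element of its $K^G$-coset. Local smooth sections of $\pi$ and $\partial_H$ make the lift local-smooth, so $\pi_r$ admits local sections and is a surjective submersion. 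The quotient smooth structure on $W_r\Lambda/\!\sim$ is then the one transported from $W_r\Theta$, and uniqueness is standard for surjective submersions.

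For (c), $\pi_\bullet$ is a simplicial morphism, so $\overline f^r_i\circ\pi_{r+1}=\pi_r\circ f^{r+1}_i$ and similarly for degeneracies. Uniqueness follows because $\pi_{r+1}$ and $\pi_r$ are surjective, and smoothness holds because they are submersions. The main obstacle is the inductive lifting in (a), specifically checking that the fibre-matching of shared edges is compatible with the constraint $\rho(g_{i,j+1})(\kappa_{i,j+1})=\kappa_{i,j}$ along whole rows.
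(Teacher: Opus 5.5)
Your overall route is the paper's: the paper calls the proposition immediate, because the surmersive double Lie groupoid morphism $\pi\colon\Lambda\to\Theta$ (sides $\id_G$, $\partial_H$, core $\id_K$) induces a layerwise surmersive simplicial morphism $\pi_\bullet\colon W_\bullet\Lambda\to W_\bullet\Theta$. Your (a)–(c) spell this out.

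Your lifting in (a) is fine. A representative of a class with prescribed right edge in the $K^G$-coset of its right edge exists and is unique. So rows lift from right to left, and the $G$-edges match automatically because $\pi$ fixes them. For the same reason, the "main obstacle" you flag is not an obstacle. Part (c) is also fine.

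The flaw is in (b), at the step showing that the intermediate edges $g_{i,j+1}$ computed from $x'$ agree with those computed from $x$. You justify this by $\partial_G(\kappa)=1$ for $\kappa\in K^G$, reinterpreting $K^G$ as a subset of $\ker\partial_G$. That is false. By definition $K^G=\ker\partial_H$, and you use exactly this a few lines later when you say the fibres of $\partial_H$ are $K^G$-cosets. Moreover, $\partial_G$ need not be trivial on $K^G$: for $M$ a point and $K=G\times H$ with the two projections, $K^G=G\times\{1\}$ and $\partial_G$ restricts to an isomorphism on it. The fact that $\ker\partial_G$ and $\ker\partial_H$ commute only makes $\rho$ well defined; it does not kill $\partial_G$.

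The correct argument uses the constraint in the definition of $[x]$. Applying $\partial_G$ to $\kappa_{i,j}=\rho(g_{i,j+1})(\kappa_{i,j+1})$ gives $\partial_G(\kappa_{i,j})=g_{i,j+1}\,\partial_G(\kappa_{i,j+1})\,g_{i,j+1}^{-1}$. Hence, if the modified square has the same top edge as the original, its bottom edge is
$\partial_G(\kappa_{i,j})^{-1}\,g_{i,j+1}\,\partial_G(\kappa_{i,j+1})=g_{i,j+1}$.
In other words, $(\kappa_{i,j},g_{i,j+1},\kappa_{i,j+1})$ is an isotropy arrow of $\Lambda\rightrightarrows G$, which is precisely what elements of $N$ are. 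The cancellation happens in the composite, not in the individual factors. Inducting down the rows, starting from the common top row $g_2,\dots,g_r$, each square of $x'$ is the corresponding square of $x$ multiplied by an element of $N$, so $\pi_r(x')=\pi_r(x)$. With this repair your proof goes through.
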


	Explicitly, the $0$-th layer is
\begin{equation*}
	W_0\Theta
	= M
\end{equation*}
and the first layer is 
\begin{equation*}
	W_1\Theta
	= G \lrtimes{\s}{\tg} H, 
\end{equation*}
via the smooth identification 
\begin{equation*}
		\begin{bmatrix}
			g_1
			\\
			k_{1,1}
		\end{bmatrix}=\begin{pmatrix}
		g_1\\
		\partial_H(k_{1,1})
		\end{pmatrix}
\end{equation*}
where $g_1\in G$ and $k_{1,1} \in K$ are such that $\s(g_1) = \tg(k_{1,1}) \in M$.
The second layer consists of classes
\begin{equation*}
	\begin{bmatrix}
			g_1 & g_2 \\
			k_{1,1} & k_{1,2} \\
			& k_{2,2}
	\end{bmatrix}
	\qquad
	\text{ with }
	\qquad
	\begin{aligned}
	&g_1,g_2\in G, k_{1,1}, k_{1,2}, k_{2,2}\in K\\
		&\s(g_1) = \tg(g_2)\text{, }
		 \s(k_{1,2}) = \tg(k_{2,2})\text{, }
		\\
		&\tg(k_{1,1}) = \s(g_1)\text{, }
		 \tg(k_{1,2}) = \s(g_2).
	\end{aligned}
\end{equation*}
\bigskip

The general face and degeneracy maps of $W_\bullet \Theta$  are the quotients of the corresponding maps $W_\bullet \Lambda$; the formulas can be obtained  by applying $\pi_r$ and replacing $k_{i,j}$ by their $\partial_H$-images where appropriate.
The following focuses instead on the lower-level maps.

The lower face maps are 
\begin{align*}
	\overline f^1_0\colon &W_1\Theta \to W_0\Theta=M, \quad \begin{pmatrix} g_1 \\ h_{1,1} \end{pmatrix} \mapsto \s(h_{1,1}) \quad \text{ and}\\
	\overline f^1_1\colon &W_1\Theta \to W_0\Theta=M, \quad \begin{pmatrix}
		g_1 \\ h_{1,1}
	\end{pmatrix} \mapsto \tg(g_1),
\end{align*}
and 
\begin{align*}
	\overline f^2_0\colon W_2\Theta \to W_1\Theta,\quad 
	&\begin{bmatrix}
		g_1 & g_2 \\
		k_{1,1} & k_{1,2} \\
		& k_{2,2}
	\end{bmatrix}
	\mapsto
	\begin{pmatrix}
		\scriptstyle \partial_G(k_{1,1})^{-1} g_2 \partial_G(k_{1,2}) \\
		\partial_H(k_{2,2})
	\end{pmatrix},
	\\
	\overline f^2_1\colon W_2\Theta \to W_1\Theta,\quad 
	&\begin{bmatrix}
		g_1 & g_2 \\
		k_{1,1} & k_{1,2} \\
		& k_{2,2}
	\end{bmatrix}
	\mapsto
	\begin{pmatrix}
		g_1 \cdot g_2 \\
		\partial_H(k_{1,2} \cdot k_{2,2})
	\end{pmatrix},
	\\
	\overline f^2_2\colon W_2\Theta \to W_1\Theta,\quad 
	&\begin{bmatrix}
		g_1 & g_2 \\
		k_{1,1} & k_{1,2} \\
		& k_{2,2}
	\end{bmatrix}
	\mapsto
	\begin{pmatrix}
		g_1\\
		\partial_H(k_{1,1})
	\end{pmatrix}.
\end{align*}
For $i=0,1,2$, the surmersive $(2,i)$-horn maps are hence given by 
\begin{align*}
	\overline\lambda_{2,0}&=\left(\overline f^2_1, \overline f^2_2\right)\colon \begin{bmatrix}
		g_1 & g_2 \\
		k_{1,1} & k_{1,2} \\
		& k_{2,2}
	\end{bmatrix} \mapsto
	\left(
		\begin{pmatrix}
			g_1 \cdot g_2 \\
			\partial_H(k_{1,2})\cdot \partial_H(k_{2,2})
		\end{pmatrix},
		\begin{pmatrix}
			g_1 \\
			\partial_H(k_{1,1})
		\end{pmatrix}
	\right),\\
	\overline\lambda_{2,1}&=\left(\overline f^2_0, \overline f^2_2\right)\colon \begin{bmatrix}
		g_1 & g_2 \\
		k_{1,1} & k_{1,2} \\
		& k_{2,2}
	\end{bmatrix} \mapsto
	\left(
		\begin{pmatrix}
			\partial_G(k_{1,1})\inv\cdot g_2\cdot \partial_G(k_{1,2}) \\
			\partial_H( k_{2,2})
		\end{pmatrix},
		\begin{pmatrix}
			g_1 \\
			\partial_H(k_{1,1})
		\end{pmatrix}
	\right), \\
\text{ and }\quad  	\overline\lambda_{2,2}&=\left(\overline f^2_0, \overline f^2_1\right)\colon \begin{bmatrix}
		g_1 & g_2 \\
		k_{1,1} & k_{1,2} \\
		& k_{2,2}
	\end{bmatrix} \mapsto
	\left(
		\begin{pmatrix}
			\partial_G(k_{1,1})\inv\cdot g_2\cdot \partial_G(k_{1,2}) \\
			 \partial_H(k_{2,2})
		\end{pmatrix},
		\begin{pmatrix}
			g_1\cdot g_2 \\
			\partial_H(k_{1,2})\cdot \partial_H(k_{2,2})
		\end{pmatrix}
	\right).
\end{align*}

\bigskip

In low levels, the degeneracy maps are given by 
\begin{equation*}
	\overline \delta^0_0\colon M \to W_1 \Theta, \quad x \mapsto \begin{pmatrix}
		1^G_x \\ 1^H_x
	\end{pmatrix},
\end{equation*}
\begin{align*}
	\overline\delta^1_0\colon &W_1 \Theta \to W_2 \Theta, \quad 
	\begin{pmatrix}
		g_1 \\ h_{1,1}
	\end{pmatrix}
	\mapsto
	\begin{bmatrix}
		1^G_{\tg(g_1)} & g_1 \\ 		
		1^K_{\tg(g_1)} & 1^K_{\s(g_1)} \\
		& k_{1,1}
	\end{bmatrix}
	\\
	\overline\delta^1_1\colon &W_1 \Theta \to W_2 \Theta, \quad 
	\begin{pmatrix}
		g_1 \\ h_{1,1}
	\end{pmatrix}
	\mapsto
	\begin{bmatrix}
		g_1 & 1^G_{\s(g_1)} \\
		k_{1,1} & k_{1,1} \\
		& 1^K_{\s(k_{1,1})}
	\end{bmatrix}
\end{align*}
for any choice of $k_{1,1}\in K$ such that $\partial_H(k_{1,1})=h_{1,1}$, 
and 
\begin{align*}
	\overline\delta^2_0\colon &W_2 \Theta \to W_3 \Theta, \quad 
	\begin{bmatrix}
		g_1 & g_2 \\
		k_{1,1} &k_{1,2} \\
		& k_{2,2}
	\end{bmatrix}
	\mapsto
	\begin{bmatrix}
		1^G_{\tg(g_1)} & g_1 & g_2\\ 		
		1^K_{\tg(g_1)} & 1^K_{\s(g_1)} & 1^K_{\s(g_2)} \\
		& k_{1,1} & k_{1,2} \\
		& & k_{2,2}
	\end{bmatrix}
	\\
	\overline\delta^2_1\colon &W_2 \Theta \to W_3\Theta, \quad 
	\begin{bmatrix}
		g_1 & g_2 \\
		k_{1,1} & k_{1,2} \\
		& k_{2,2}
	\end{bmatrix}
	\mapsto
	\begin{bmatrix}
		g_1 & 1^G_{\s(g_1)} & g_2 \\
		k_{1,1} & k_{1,1} & k_{1,2} \\
		& 1^K_{\s(k_{1,1})} & 1^K_{\s(k_{1,2})} \\
		& & k_{2,2}
	\end{bmatrix}
	\\
	\overline\delta^2_2\colon &W_2 \Theta \to W_3 \Theta, \quad 
	\begin{bmatrix}
		g_1 & g_2 \\
		k_{1,1} & k_{1,2} \\
		& k_{2,2}
	\end{bmatrix}
	\mapsto
	\begin{bmatrix}
		g_1 & g_2 & 1^G_{\s(g_2)} \\
		k_{1,1} & k_{1,2} & k_{1,2} \\
		& k_{2,2} & k_{2,2} \\
		& & 1^K_{\s(k_{2,2})}
	\end{bmatrix}.
\end{align*}

	\section{The bar construction applied to VB-groupoids}
	This section focuses on the class of examples given by (split) VB-groupoids, which arise as special examples of double Lie groupoids with an abelian side.
	\subsection{VB-groupoids and $2$-representations}
	This section considers VB-groupoids \cite{Pradines88,Mackenzie05}, i.e.~double Lie groupoids
	 \[\begin{tikzcd}
	{\Gamma} & G \\
	{E} & M
	\arrow[from=1-1, to=1-2]
	\arrow[shift left=1, from=1-1, to=2-1]
	\arrow[shift right=1, from=1-1, to=2-1]
	\arrow[shift right=1, from=1-2, to=2-2]
	\arrow[shift left=1, from=1-2, to=2-2]
	\arrow[from=2-1, to=2-2]
      \end{tikzcd}\]
where $q_\Gamma\colon \Gamma\to G$ and $q\colon E\to M$ are smooth vector bundles and all structure maps are compatible.
       The source and target maps $\Gamma\rr
        E$ are called $\tilde\s$ and $\tilde\tg$, respectively. The multiplication is $\tilde \m\colon \Gamma^{(2)}\to \Gamma$, the inversion $\tilde\iv\colon \Gamma\to\Gamma$ and the unit inclusion $\tilde\epsilon \colon E\to \Gamma$. 
       
Since $(\Gamma\rr E, G\rr M)$ is a VB-groupoid, the pairs $(\tilde \s, \s)$, $(\tilde\tg, \tg)$, $(\tilde \m, \m)$, $(\tilde \iv,\iv)$ and $(\tilde\epsilon, \epsilon)$ are all vector bundle morphisms.
The pair $(\tilde \s, \s)$ is in particular a vector bundle fibration, and so its kernel
\[\Gamma^{\s}:=\ker\tilde\s\subseteq \Gamma
\]
is a vector bundle over $G$ and its restriction
\[ C:=\Gamma^{\s}\an{M} 
\]
to $M$ is a vector bundle over $M$ with projection denoted by $q_C$.
Consider an element $\gamma_g\in \Gamma^{\s}_g$ for some $g\in G$. Then $\tilde\s(\gamma_g)=0_{\s(g)}$, which equals $\tilde\tg \left(0_{g\inv}\right)$.
The product $\gamma_g\cdot0_{g\inv}=\tilde \m(\gamma_g, 0_{g\inv})$ is then well-defined and an element of $\Gamma^\s_{\tg(g)}=C_{\tg(g)}$. Denote it by $c_{\tg(g)}$. Then 
$\gamma_g=c_{\tg(g)}\cdot 0_g$ by construction, since $\tilde\iv(0_g)=0_{\iv(g)}=0_{g\inv}$.
This shows that $\Gamma^\s$ is isomorphic to the pullback vector bundle $\tg^!C\to G$, via the map \[\tg^!C\to \Gamma^\s, \qquad (c_{\tg(g)}, g)\mapsto c_{\tg(g)}\cdot 0_g,\]
and leads to the short exact sequence 
\begin{equation}\label{core_sequence} 0 \longrightarrow \tg^!C\longrightarrow \Gamma\overset{\tilde \s^!}{\longrightarrow} \s^!E\longrightarrow 0
\end{equation}
of vector bundles over $G$. The map $\tilde\s^!\colon \Gamma \to \s^!E$ is, as always, the vector bundle epimorphism $\gamma_g\mapsto (\tilde \s(\gamma_g), g)$.

Define $\pi_i\colon G\lrtimes{\s}{\tg}G\to M$ for $i=0,1,2$ by
     $\pi_2(g,h)=\s(h)$, $\pi_1(g,h)=\tg(h)=\s(g)$, and $\pi_0(g,h)=\tg(g)$. 
Consider a  splitting $\sigma\colon \s^!E\to\Gamma$ of the short exact sequence \eqref{core_sequence} of vector bundles over $G$, such that
\begin{equation*}\label{splitting_unital}
\sigma(e, 1_x)=1_{e}\in \Gamma
\end{equation*}
for all $x\in M$ and $e\in E_x$.
      In this splitting the VB-groupoid structure is equivalent to a 2-term representation up to homotopy of $G$ on $C[0]\oplus E[1]$, see \cite{GrMe17}:
      \begin{itemize}
      \item a morphism $\partial\colon C\to E$ of vector bundles (over the identity on $M$);
      \item two smooth quasi-actions\footnote{Here, $\mathfrak{gl}(C)$ is the set of linear maps between fibers of $C$. A quasi-action is a map $\Delta\colon G\to \mathfrak{gl}(C)$ such that $\Delta_g\colon C_{\s(g)}\to C_{\tg(g)}$ for all $g\in G$.}   $\Delta^C\colon G\to \mathfrak{gl}(C)$ and $\Delta^E\colon G\to \mathfrak{gl}(E)$ with $\Delta^C(1_x)=\id_{C_x}$ and $\Delta^E(1_x)=\id_{E_x}$ for all $x\in M$
      ($\Delta^C$ and $\Delta^E$ are \emph{unital});
      \item a transformation $2$-cochain
       $\Omega\in C^2(G, E\to C)=\Gamma(\operatorname{Hom}((\pi_2)^!E, (\pi_0)^!C))$ vanishing on $(1_{\tg(g)},g)$ and $(g, 1_{\s(g)})$ for all $g\in G$
      \end{itemize}
      such that 
      \begin{enumerate}
      \item $\partial\circ \Delta^C_g=\Delta^E_g\circ\partial\colon C_{\s(g)}\to E_{\tg(g)}$ for all $g\in G$;
      \item $0=\Delta^C_g\Delta^C_h-\Delta^C_{gh}+\Omega_{g,h}\circ\partial\colon C_{\s(h)}\to C_{\tg(g)}$;
       \item $0=\Delta^E_g\Delta^E_h-\Delta^E_{gh}+\partial\circ\Omega_{g,h}\colon E_{\s(h)}\to E_{\tg(g)}$;
       \item and $0=\Delta^C_g\Omega_{h,l}-\Omega_{gh,l}+\Omega_{g, hl}-\Omega_{g,h}\Delta^E_l\colon E_{\s(l)}\to C_{\tg(g)}$
      \end{enumerate}
      for $(g,h,l)\in G^{(3)}$.

      The VB-groupoid $\Gamma$ is isomorphic via the splitting to 
      \[\begin{tikzcd}
	\tg^!C\oplus \s^! E & G \\
	{E} & M
	\arrow[from=1-1, to=1-2]
	\arrow[shift left=1, from=1-1, to=2-1]
	\arrow[shift right=1, from=1-1, to=2-1]
	\arrow[shift right=1, from=1-2, to=2-2]
	\arrow[shift left=1, from=1-2, to=2-2]
	\arrow[from=2-1, to=2-2]
      \end{tikzcd}\] with structure maps
      \begin{enumerate}
      \item $\tilde \s\colon \tg^!C\oplus \s^! E \to E$, \quad $\tilde\s(c, g,e)=e$, 
      \item $\tilde \tg\colon \tg^!C\oplus \s^! E \to E$, \quad $\tilde\tg(c, g,e)=\Delta^E_ge+\partial c$,
      \item $\tilde\epsilon\colon E\to \tg^!C\oplus \s^! E $, \quad $\tilde\epsilon (e_x)=(0^C_x,1_x,e_x)$,
      \item $\tilde \m\colon (\pi_0)^!C\oplus (\pi_1)^!C\oplus (\pi_2)^!E \to \tg^!C\oplus \s^! E$,
      \[\tilde \m\left( (d, g, \Delta^E_he+\partial c), (c,h, e)\right)=(d+\Delta^C_gc-\Omega_{g,h}e, gh, e) \]
      and 
      \item $\tilde\iv\colon \tg^!C\oplus \s^! E\to \tg^!C\oplus \s^! E$, $\tilde\iota(c,g,e)=(-\Delta^C_{g\inv}c+\Omega_{g\inv, g}e, g\inv, \Delta_g^Ee+\partial c)$.
      \end{enumerate}
      Here the space $\Gamma^{(2)}\to G^{(2)}$ is identified with the vector bundle
      \[ (\pi_0)^!C\oplus (\pi_1)^!C\oplus (\pi_2)^!E\to G^{(2)}
      \]
      via 
      \[\left( (d, g, \Delta^E_he+\partial c), (c, h, e)\right) \simeq  (d, g, c, h,e)
      \]
      and the multiplication $\tilde \m$ then sends $(d,g,c,h,e)$ to $(d+\Delta^C_gc-\Omega_{g,h}e, gh, e)$.

	\subsection{Semi-direct product of a Lie groupoid with a $2$-representation}
	This section describes the Lie $2$-groupoid obtained by applying the bar construction to a split VB-groupoid.
	
\begin{proposition}\label{prop:vb-groupoids-bar-construction-layer-structure}
	Let $(\Gamma, E, G, M)$ be a VB-groupoid with core $C$. Choose a splitting $\sigma\colon \s^!E\to \Gamma$ such that \eqref{splitting_unital}.
	Then, for $r > 0$, the $r$-th layer of $W_\bullet \Gamma$ is diffeomorphic to
	\begin{equation*}
		\left\{\left.\begin{pmatrix}
				g_0 & g_1 & g_2 & \dots & g_{r-1} & \\
				& c_{1,1} & c_{1,2} & \dots & c_{1,r-1} & e_1 \\
				& & c_{2,2} & \dots & c_{2,r-1} & e_2 \\
				& & & \ddots & \vdots & \vdots \\
				& & & & c_{r-1,r-1} & e_{r-1} \\
				& & & & & e_r
			\end{pmatrix}
		\right|
		\begin{aligned}
			\s(g_{j}) &= \tg(g_{j+1}) && \forall\, 0 \leq j < r-1\\
			q_C(c_{i,j}) &= \tg(g_{j}) && \forall\, 1 \leq i \leq j < r \\
			q(e_i) &= \s(g_{r-1}) && \forall\, 1 \leq i \leq r
		\end{aligned}
		\right\}
	\end{equation*}
	as a smooth embedded submanifold  of $G^{(r)} \times E^{(r)} \times C^{(r-1)} \times C^{(r-2)} \times \dots \times C^{(1)}$.
\end{proposition}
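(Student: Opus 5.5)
The plan is to build an explicit smooth bijection $\Psi_r$ from the set of matrices in the statement onto $W_r\Gamma$, as described in Section~\ref{sec:bar-construction}, and to check that both $\Psi_r$ and its inverse are smooth. Throughout I work in the split model $\Gamma\cong \tg^!C\oplus\s^!E$ provided by $\sigma$. The roles in the bar construction are then as follows. The groupoid playing the role of ``$G$'' is $G\rr M$, giving the horizontal edges. The groupoid playing the role of ``$H$'' is the vector bundle $E\rr M$, viewed as a groupoid under fibrewise addition, giving the vertical edges.

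\textbf{Step 1: the shape of a square.} A square $(c,g,e)\in\Gamma$ has both horizontal edges equal to $g=q_\Gamma(c,g,e)$, since $q_\Gamma$ is the source and the target of the bundle groupoid $\Gamma\to G$. Its right edge is $\tilde\s(c,g,e)=e$ and its left edge is $\tilde\tg(c,g,e)=\Delta^E_ge+\partial c$.

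\textbf{Step 2: consequences for a simplex.} Take $(g_0,(\gamma_{i,j})_{1\le i\le j<r},e_r)\in W_r\Gamma$. The compatibility $\s_G(\gamma_{i,j})=\tg_G(\gamma_{i+1,j})$ forces all squares in column $j$ to lie over a single $g_j\in G$. The conditions $\s(g_0)=\tg(\tg_G(\gamma_{1,1}))$ and composability along the top row give $\s(g_j)=\tg(g_{j+1})$. The core component $c_{i,j}$ of $\gamma_{i,j}$ lies in $C_{\tg(g_j)}$. The remaining conditions $\s_H(\gamma_{i,j})=\tg_H(\gamma_{i,j+1})$ say that the right edge of $\gamma_{i,j}$ is determined by $\gamma_{i,j+1}$. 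Hence the only free $E$-data are the right edges $e_1,\dots,e_{r-1}$ of the last column together with $h=e_r$, and all of these lie over $\s(g_{r-1})$.

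\textbf{Step 3: the map $\Psi_r$.} This gives the definition of $\Psi_r$ by descending recursion on $j$. First set $\gamma_{i,r-1}=(c_{i,r-1},g_{r-1},e_i)$. Then, for $j<r-1$, set
\[\gamma_{i,j}=\bigl(c_{i,j},\,g_j,\,\Delta^E_{g_{j+1}}\epsilon_{i,j+1}+\partial c_{i,j+1}\bigr),\]
where $\epsilon_{i,j+1}$ is the $E$-component of $\gamma_{i,j+1}$. Unwinding the recursion, this $E$-component is the iterated expression $\Delta^E_{g_{j+1}}(\cdots(\Delta^E_{g_{r-1}}e_i+\partial c_{i,r-1})\cdots)+\partial c_{i,j+1}$. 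It is smooth in all entries because $\Delta^E$ and $\partial$ are smooth. The base-point conditions on the image are exactly those listed in the statement, so $\Psi_r$ lands in $W_r\Gamma$. The inverse map simply reads off $g_0$, the base $g_j$ of each column, the core components $c_{i,j}$, the $E$-components of the last column, and $h$. It is a restriction of smooth projections, so it is smooth. By Step~2 the two maps are mutually inverse.

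\textbf{Step 4: the embedded-submanifold claim.} The defining set is an iterated fibre product of $G^{(r)}$ with copies of $C$ and of $E$. Each factor is taken along $\tg\circ\mathrm{pr}_j$ or $\s\circ\mathrm{pr}_{r-1}$ against the bundle projections $q_C$ and $q$. These projections are surjective submersions, so the set is an embedded submanifold of $G^{(r)}\times E^{(r)}\times C^{(r-1)}\times\cdots\times C^{(1)}$.

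The main obstacle I expect is the bookkeeping in Steps~2 and~3. One has to get the orientation conventions right, namely which side of a square is $\tilde\s$ and which is $\tilde\tg$, and the index ranges (column $j$ contains rows $1,\dots,j$). One also has to confirm that nothing beyond these data is constrained. In particular, there is no condition linking columns vertically: vertical composition in $\Gamma$ is addition in the fibres over $g_j$ and imposes no relation between squares beyond having the same base.
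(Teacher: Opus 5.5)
Your proposal is correct and follows the paper's proof. You rewrite each square as a triple $(c_{i,j},g_j,e_{i,j})$ via the splitting, note that every column lies over a single $g_j$, and recover the $E$-components outside the last column by the recursion $e_{i,j}=\partial c_{i,j+1}+\Delta^E_{g_{j+1}}e_{i,j+1}$. Your Steps 3 and 4 additionally make explicit the smoothness of the bijection and the embedded-submanifold claim, which the paper leaves implicit.
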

Note that for $C=0$, this is isomorphic to the nerve $W_\bullet \Gamma$  of the action groupoid $G\times E$ since in this case $\Gamma$ is the action groupoid for the action $\Delta^E$ of $G$ on $E$.
Similarly, for $E=0$, the linear Lie groupoid $\Gamma=\tg^!C\rr M$ is the semi-direct product $G \ltimes C\rr M$.
The general case is therefore a `$2$-term' generalisation of the semi-direct product, which is referred to as \textbf{the semi-direct product of $G$ with the $2$-term representation $(C \xrightarrow{\partial} E, \Delta^C, \Delta^E, \Omega)$}.

\begin{proof}
	An element of $W_r \Gamma$ is a triangular arrangement of elements of $\Gamma$:
	\begin{equation*}
		\begin{tikzpicture}[y=-1.2cm,x=1.2cm]
			\node at (0.5,-0.4) {$g$};
			\draw[->] (1,0) to (0,0);

			\arrowsquare[1,0,1,1,$\gamma_{1,1}$,tlbr]

			\arrowsquare[2,0,1,1,$\gamma_{1,2}$,tbr]
			\arrowsquare[2,1,1,1,$\gamma_{2,2}$,lbr]

			\node at (3.5,1) {$\dots$};
			\arrowsquare[4,0,1,1,$\gamma_{1,r-1}$,tlr]
			\arrowsquare[4,1,1,1,$\gamma_{2,r-1}$,tlbr]

			\node at (3.5,2.5) {$\ddots$};
			\node at (4.5,2.5) {$\vdots$};
			\arrowsquare[4,3,1,1,$\gamma_{r-1,r-1}$,tlbr]
	
			\node[text width=1cm] at (5.6, 4.5) {$e_r$};
			\draw[->] (5,5) to (5,4);
		\end{tikzpicture}
	\end{equation*}
	Since all horizontal arrows in the same column are equal and elements of $\Gamma$ are triples in $\tg^! C \oplus \s^! E$, rewrite this as
	\begin{equation*}
		\begin{tikzpicture}[y=-1.2cm,x=1.2cm]
			\node at (0.5,-0.4) {$g_0$};
			\node at (1.5,-0.4) {$g_1$};
			\node at (2.5,-0.4) {$g_2$};
			\node at (4.5,-0.4) {$g_{r-1}$};

			\draw[->] (1,0) to (0,0);

			\arrowsquare[1,0,1,1,$\substack{c_{1,1}\\e_{1,1}}$,tlbr]

			\arrowsquare[2,0,1,1,$\substack{c_{1,2}\\e_{1,2}}$,tbr]
			\arrowsquare[2,1,1,1,$\substack{c_{2,2}\\e_{2,2}}$,lbr]

			\node at (3.5,1) {$\dots$};
			\arrowsquare[4,0,1,1,$\substack{c_{1,r-1}\\e_{1,r-1}}$,tlr]
			\arrowsquare[4,1,1,1,$\substack{c_{2,r-1}\\e_{2,r-1}}$,tlbr]

			\node at (3.5,2.5) {$\ddots$};
			\node at (4.5,2.5) {$\vdots$};
			\arrowsquare[4,3,1,1,$\substack{c_{r-1,r-1}\\e_{r-1,r-1}}$,tlbr]
	
			\node[text width=1cm] at (5.6, 4.5) {$e_r$};
			\draw[->] (5,5) to (5,4);
		\end{tikzpicture}
	\end{equation*}
	This information is recorded in the matrix
	\begin{equation*}
		\begin{pmatrix}
			g_0 & g_1 & g_2 & \dots & g_{r-1} & \\
			& c_{1,1} & c_{1,2} & \dots & c_{1,r-1} & e_1 \\
			& & c_{2,2} & \dots & c_{2,r-1} & e_2 \\
			& & & \ddots & \vdots & \vdots \\
			& & & & c_{r-1,r-1} & e_{r-1} \\
			& & & & & e_r
		\end{pmatrix},
	\end{equation*}
	where $e_i:=e_{i,r-1}$ for $i=1,\ldots, r-1$.
	All elements $e_{i,j}\in E$ with $0 < i\leq  j < r-1$ can be calculated recursively as
	\begin{equation*}
		e_{i,j} = \tilde{\s}(\gamma_{i,j}) = \tilde{\tg }(\gamma_{i, j+1}) = \partial c_{i,j+1} + \Delta^E_{g_{j+1}} e_{i,j+1}\text{,}
	\end{equation*}
	so this process loses no information and is invertible.
\end{proof}

By \cite{MeTa11} the $0$-th layer $W_0\Gamma$ is
\begin{equation*}
	W_0\Gamma
	= M
\end{equation*}
and the first layer is 
\begin{equation*}
	W_1\Gamma
	= G \lrtimes{\s}{q} E
	= \s^! E
\end{equation*}
as a vector bundle over $G$.
Its elements are edges
\begin{equation*}
	\begin{tikzpicture}[y=-1cm]
		\node at (0.5, -0.4){$g_1$};
		\draw[->] (1,0) to (0,0); 
		\draw[->] (1,1) to (1,0); 
		\node[text width=0.8cm] at (1.5, 0.5) {$e_1$};
	\end{tikzpicture}
\end{equation*}
or matrices
\begin{equation*}
	\begin{pmatrix}
		g_1 &  \\
		& e_1
	\end{pmatrix}
\end{equation*}
with $\s(g_1) = q(e_1)$.
The second layer is given by 
\begin{align*}
	W_2 \Gamma
	&= G^{(2)} \lrtimes{p_2}{q_\Gamma} \,\Gamma \lrtimes{\tilde{\s}}{p_1} E^{(2)}
	\cong G^{(2)} \lrtimes{p_2}{q_\Gamma} \,(\tg^! C \oplus \s^! E) \lrtimes{\tilde \s}{p_1} E^{(2)},
\end{align*}
where $p_i$ is the projection of $G^{(r)}$ or $E^{(r)}$ to its $i$-th component from the left.
By \cite{MeTa11} the simplex notation of its elements is
\begin{equation*}
	\begin{tikzpicture}[y=-1cm]
		\node at (0.5,-0.4) {$g_0$};
		\node at (1.5,-0.4) {$g_1$};
		\draw[->] (1,0) to (0,0);
		\arrowsquare[1,0,1,1,$\gamma_{1,1}$,tlbr]
		\node at (2.4,0.5) {$e_1$};
		\node at (2.4,1.5) {$e_2$};
		\draw[->] (2,2) to (2,1); 
	\end{tikzpicture}
\end{equation*}
With $\gamma_{1,1} = (c_{1,1}, g_1, e_{1})$, this is written as a matrix 
\begin{equation*}
	\begin{pmatrix}
		g_0 & g_1 & \\
		& c_{1,1} & e_1 \\
		& & e_2
	\end{pmatrix}
	\qquad
	\text{ with }
	\qquad
	\begin{aligned}
		q(e_1) &= \s(g_1)\text{,}
		& q_C(c_{1,1}) &= \s(g_0) = \tg(g_1)\text{,}\\
		q(e_2) &= \s(g_1)\text{,}
		& \s(g_0) &= \tg(g_1)\text{.}
	\end{aligned}
\end{equation*}
Lastly, the third layer consists of simplices
\begin{equation*}
	\begin{tikzpicture}[y=-1cm]
		\node at (0.5,-0.4) {$g_0$};
		\node at (1.5,-0.4) {$g_1$};
		\node at (2.5,-0.4) {$g_2$};

		\draw[->] (1,0) to (0,0);
		\arrowsquare[1,0,1,1,$\gamma_{1,1}$,tlbr]
		\arrowsquare[2,0,1,1,$\gamma_{1,2}$,tbr]
		\arrowsquare[2,1,1,1,$\gamma_{2,2}$,lbr]
		\draw[->] (3,3) to (3,2);

		\node at (3.4,0.5) {$e_{1}$};
		\node at (3.4,1.5) {$e_{2}$};
		\node at (3.4,2.5) {$e_{3}$};
	\end{tikzpicture}
\end{equation*}
or matrices
\begin{equation*}
	\begin{pmatrix}
		g_0 & g_1 & g_2 & \\
		& c_{1,1} & c_{1,2} & e_1 \\
		& & c_{2,2} & e_2 \\
		& & & e_3
	\end{pmatrix}
	\qquad
	\text{ with }
	\qquad
	\begin{aligned}
	q(e_1) &= \s(g_2)\text{,}
	& q_C(c_{1,1}) &= \tg(g_1)\text{,}
	& \s(g_0) &= \tg(g_1)\text{,}
	\\
	q(e_2) &= \s(g_2)\text{,}
	& q_C(c_{1,2}) &= \tg(g_2)\text{,}
	& \s(g_1) &= \tg(g_2)\text{,}
	\\
	q(e_3) &= \s(g_2)\text{,}
	& q_C(c_{2,2}) &= \tg(g_2)\text{.}
	&
	\end{aligned}
\end{equation*}

\bigskip

A simplex
\begin{equation*}\label{eq:vb-groupoid-M-r}
	x := \begin{pmatrix}
		g_0 & g_1 & g_2 & \dots & g_{r-1} & \\
		& c_{1,1} & c_{1,2} & \dots & c_{1,r-1} & e_1 \\
		& & c_{2,2} & \dots & c_{2,r-1} & e_2 \\
		& & & & c_{r-1,r-1} & e_{r-1} \\
		& & & & & e_r
	\end{pmatrix}	\in W_{r}\Gamma
\end{equation*}
for $0 < i < r$ is sent by $f^r_i$ to
\begin{equation*}
	\begin{pmatrix}
		g_0 & g_1 & g_2 & \dots & g_{i-2} & g_{i-1}\cdot g_{i} & g_{i+1} & g_{i+2} & \dots & g_{r-1} & \\
		& c_{1,1} & c_{1,2} & \dots & c_{1,i-2} &d^C_1& c_{1,i+1} & c_{1,i+2} & \dots & c_{1,r-1} & e_1 \\
		& & c_{2, 2} & \dots & c_{2,i-2} & d^C_2 & c_{2,i+1} & c_{2,i+2} & \dots & c_{2,r-1} & e_2 \\
		& & & \ddots & \vdots & \vdots &  \vdots & \vdots & \ddots & \vdots & \vdots \\
		& & & & c_{i-2, i-2}& d^C_{i-2} & c_{i-2,i+1} & c_{i-2,i+2} & \dots & c_{i-2,r-1}& e_{i-2} \\
		& & & & & d^C_{i-1} & c_{i-1,i+1} & c_{i-1,i+2} & \dots & c_{i-1,r-1} & e_{i-1} \\
		& & & & & & d^E_{i+1} & d^E_{i+2} & \dots &d^E_{r-1}& e_{i} + e_{i+1} \\
		& & & & & & & c_{i+1,i+1} & \dots & c_{i+1,r-1} & e_{i+1} \\
		& & & & & & & & \ddots & \vdots & \vdots \\
		& & & & & & & & & c_{r-1,r-1} & e_{r-1} \\
		& & & & & & & & & & e_r 
	\end{pmatrix}\in W_{r-1}\Gamma
\end{equation*}
 where
\begin{equation*}
	d_{j}^C := c_{j,i-1} + \Delta^C_{g_{i-1}} c_{j,i} - \Omega_{g_{i-1},g_i} \left( \left( \sum_{k=i}^{r-2} \Delta^E_{g_{i+1}}\cdots \Delta^E_{g_{k}}\partial c_{j,k+1}\right) + \Delta^E_{g_{i+1}} \dots \Delta^E_{g_{r-1}} e_j\right)
\end{equation*}
for $1 \leq j < i$ and
\begin{equation*}
	d_{j}^E := c_{i,j}+c_{i+1,j}
\end{equation*}
for $i < j < r$.
The map $f^r_r\colon W_r \Gamma \to W_{r-1}\Gamma$ sends the simplex $x$ above to the matrix
\begin{equation*}
	\begin{pmatrix}
		g_0 & g_1 & g_2 & \dots & g_{r-2} & \\
		& c_{1,1} & c_{1,2} & \dots & c_{1,r-2} & \partial c_{1,r-1} + \Delta^E_{g_{r-1}} e_1 \\
		& & c_{2,2} & \dots & c_{2,r-2} & \partial c_{2,r-1} + \Delta^E_{g_{r-1}} e_2 \\
		& & & \ddots & \vdots & \vdots \\
		& & & & c_{r-2,r-2} & \partial c_{r-2,r-1} + \Delta^E_{g_{r-1}} e_{r-2} \\
		& & & & & \partial c_{r-1, r-1} + \Delta^E_{g_{r-1}} e_{r-1}
	\end{pmatrix}
\end{equation*}
and $f^r_0\colon W_r\Gamma \to W_{r-1}\Gamma$ sends $x$ to
\begin{equation*}
	\begin{pmatrix}
		g_1 & g_2 & g_3 & \dots & g_{r-1} & \\
		& c_{2,2} & c_{2,3} & \dots & c_{2,r-1} & e_2 \\
		& & c_{3,3} & \dots & c_{3,r-1} & e_3 \\
		& & & \ddots & \vdots & \vdots \\
		& & & & c_{r-1,r-1} & e_{r-1} \\
		& & & & & e_r
	\end{pmatrix}	
\end{equation*}
and simply deletes the first $G$-element and $C$-row.
In the lowest layers, the two maps 
\begin{align*}
	f^1_0\colon &W_1\Gamma \to W_0\Gamma, \quad 
	\begin{pmatrix}
		g_0 & \\ & e_1
	\end{pmatrix}
	\mapsto \s(g_0) = q(e_1), \\
	f^1_1\colon &W_1\Gamma \to W_0\Gamma, \quad 
	\begin{pmatrix}
		g_0 & \\ & e_1
	\end{pmatrix}
	\mapsto \tg(g_0)
\end{align*}
are defined as usual.
Then
\begin{align*}
	f^2_0\colon &W_2\Gamma \to W_1 \Gamma,\quad 
	\begin{pmatrix}
		g_0 & g_1 & \\
		& c_{1,1} & e_1 \\
		& & e_2
	\end{pmatrix} \mapsto
	\begin{pmatrix}
		g_1 & \\ & e_2
	\end{pmatrix},
	\\
	f^2_1\colon &W_2 \Gamma \to W_1\Gamma,\quad 
	\begin{pmatrix}
		g_0 & g_1 & \\
		& c_{1,1} & e_1 \\
		& & e_2
	\end{pmatrix} \mapsto
	\begin{pmatrix}
		g_0 \cdot g_1 & \\ & e_1 + e_2
	\end{pmatrix},
	\\
	f^2_2\colon &W_2 \Gamma \to W_1\Gamma,\quad 
	\begin{pmatrix}
		g_0 & g_1 & \\
		& c_{1,1} & e_1 \\
		& & e_2
	\end{pmatrix} \mapsto
	\begin{pmatrix}
		g_0 & \\ & \partial c_{1,1} + \Delta^E_{g_1} e_1
	\end{pmatrix}
\end{align*}
and the third layer face maps send the simplex 
\begin{equation*}
	y:=
	\begin{pmatrix}
		g_0 & g_1 & g_2 & \\
		& c_{1,1} & c_{1,2} & e_1 \\
		& & c_{2,2} & e_2 \\
		& & & e_3
	\end{pmatrix}
\end{equation*} to the following images:
\begin{align*}
	f^3_0\colon W_3 \Gamma \to W_2 \Gamma, \quad 
y
	&\mapsto
	\begin{pmatrix}
		g_1 & g_2 & \\
		& c_{2,2} & e_2 \\
		& & e_3
	\end{pmatrix}\text{ ,}
	\\
	f^3_1\colon W_3 \Gamma \to W_2\Gamma,  \quad 
	y
	&\mapsto
	\begin{pmatrix}
		g_0 \cdot g_1 & g_2 & \\
		& c_{1,2} + c_{2,2} & e_1 + e_2 \\
		& & e_3
	\end{pmatrix},
	\\
	f^3_2\colon W_3 \Gamma \to W_2 \Gamma,  \quad 
	y
	&\mapsto
	\begin{pmatrix}
		g_0 & g_1 \cdot g_2 & \\
		& c_{1,1} + \Delta^C_{g_1} c_{1,2} - \Omega_{g_1, g_2} e_1 & e_1 \\
		& & e_2 + e_3
	\end{pmatrix}\text{ ,}
	\\
	f^3_3\colon W_3 \Gamma \to W_2 \Gamma, \quad 
	y
	&\mapsto
	\begin{pmatrix}
		g_0 & g_1 & \\
		& c_{1,1} & \partial c_{1,2} + \Delta^E_{g_2} e_1 \\
		& & \partial c_{2,2} + \Delta^E_{g_2} e_2
	\end{pmatrix}\text{.}
\end{align*}

The surmersive $(2,i)$-horn maps are hence given 
for $i=0,1,2$ by 
\begin{align*}
	\lambda_{2,0}&=(f^2_1, f^2_2)\colon \begin{pmatrix}
		g_0 & g_1 & \\
		& c_{1,1} & e_1 \\
		& & e_2
	\end{pmatrix} \mapsto
	\left(\begin{pmatrix}
		g_0 \cdot g_1 & \\ & e_1 + e_2
	\end{pmatrix},
		\begin{pmatrix}
		g_0 & \\ & \partial c_{1,1} + \Delta^E_{g_1} e_1
	\end{pmatrix}
	\right),\\
	\lambda_{2,1}&=(f^2_0, f^2_2)\colon \begin{pmatrix}
		g_0 & g_1 & \\
		& c_{1,1} & e_1 \\
		& & e_2
	\end{pmatrix}  \mapsto
	\left(
	\begin{pmatrix}
		g_1 & \\ & e_2
	\end{pmatrix},
		\begin{pmatrix}
		g_0 & \\ & \partial c_{1,1} + \Delta^E_{g_1} e_1
	\end{pmatrix}
	\right), \\
\text{ and }\quad  	\lambda_{2,2}&=(f^2_0, f^2_1)\colon \begin{pmatrix}
		g_0 & g_1 & \\
		& c_{1,1} & e_1 \\
		& & e_2
	\end{pmatrix} \mapsto
	\left(
	\begin{pmatrix}
		g_1 & \\ & e_2
	\end{pmatrix},
	\begin{pmatrix}
		g_0 \cdot g_1 & \\ & e_1 + e_2
	\end{pmatrix}
	\right).
\end{align*}

\bigskip

For $0 < i < r$, the general $i$-th degeneracy map of the $r$-th layer, $\delta_i^r\colon W_r \Gamma \to W_{r+1} \Gamma$ maps the basic simplex $x$ to 
\begin{equation*}
	\begin{pmatrix}
		g_0 & g_1 & g_2 & \dots & g_{i-1} & 1^G_{t(g_i)}& g_{i} & g_{i+1} & \dots & g_{r-1} &  \\
		& c_{1,1} & c_{1,2} & \dots & c_{1,i-1} & 0^C_{t(g_i)}& c_{1,i} & c_{1,i+1} & \dots & c_{1,r-1} & e_1 \\
		&  & c_{2,2} & \dots & c_{2,i-1} & 0^C_{t(g_i)} & c_{2,i} & c_{2,i+1} & \dots & c_{2,r-1} & e_2 \\
		& & & \ddots & \vdots & \vdots & \vdots & \vdots & \ddots & \vdots & \vdots \\
		& & & & c_{i-1,i-1} & 0^C_{t(g_i)}& c_{i-1,i} & c_{i-1,i+1} & \dots & c_{i-1,r-1} & e_{i-1}\\
		& & & & & 0^C_{t(g_i)}& c_{i,i} & c_{i,i+1} & \dots & c_{i,r-1}& e_{i}\\
		& & & & & & 0^C_{t(g_{i})} & 0^C_{t(g_{i+1})}& \dots & 0^C_{t(g_{r-1})}& 0^E_{s(g_{r-1})}\\
		& & & & & & & c_{i+1,i+1} & \dots & c_{i+1,r-1} & e_{i+1}\\
		& & & & & & & & \ddots & \vdots & \vdots \\
		& & & & & & & & & c_{r-1,r-1} & e_{r-1} \\
		& & & & & & & & & & e_{r}
	\end{pmatrix}
\end{equation*}
in $W_{r+1}\Gamma$.

Applying $\delta_0^r\colon W_{r} \Gamma \to W_{r+1} \Gamma$ to the simplex $x$ yields \begin{equation*}
	\delta_0^r(x) = \begin{pmatrix}
		1^G_{\tg(g_0)} & g_0 & g_1 & g_2 & \dots & g_{r-1} &  \\
		& 0^C_{\tg(g_0)} & 0^C_{\tg(g_1)} & 0^C_{\tg(g_2)}& \dots &0^C_{\tg(g_{r-1})}& 0^E_{\s(g_{r-1})}\\
		& & c_{1,1} & c_{1,2} & \dots & c_{1,r-1} & e_1 \\
		& & & c_{2,2} & \dots & c_{2,r-1} & e_2 \\
		& & & & \ddots & \vdots & \vdots \\
		& & & & & c_{r-1,r-1} & e_{r-1}\\
		& & & & & & e_{r}
	\end{pmatrix},
\end{equation*}
and the degeneracy map $\delta_r^r\colon W_{r} \Gamma \to W_{r+1} \Gamma$ sends $x$ to
\begin{equation*}
	\begin{pmatrix}
		g_0 & g_1 & g_2 & \dots & g_{r-1} & 1^G_{\s(g_{r-1})}  & \\
		& c_{1,1} & c_{1,2} & \dots & c_{1,r-1} & 0^C_{\s(g_{r-1})} & e_1\\
		& & c_{2,2} & \dots & c_{2,r-1} & 0^C_{\s(g_{r-1})}& e_2 \\
		& & & \ddots & \vdots & \vdots& \vdots \\
		& & & & c_{ r-1,r-1} & 0^C_{\s(g_{r-1})} & e_{r-1}\\
		& & & & & 0^C_{\s(g_{r-1})}& e_r \\
		& & & & & & 0^E_{\s(g_{r-1})}
	\end{pmatrix}.\end{equation*}

The degeneracy maps on lower layers 
read \begin{equation*}
	\delta^0_0\colon M \to W_1 \Gamma, \quad  x \mapsto \begin{pmatrix}
		1^G_x & \\ & 0^E_x
	\end{pmatrix},
\end{equation*}
\begin{align*}
	\delta^1_0\colon &W_1 \Gamma \to W_2 \Gamma, \quad 
	\begin{pmatrix}
		g_0 & \\ & e_1
	\end{pmatrix}
	\mapsto
	\begin{pmatrix}
		1^G_{\tg(g_0)} & g_0 & \\
		& 0^C_{\tg(g_0)} & 0^E_{\s(g_0)} \\
		& & e_1 
	\end{pmatrix},
	\\
	\delta^1_1\colon &W_1 \Gamma \to W_2  \Gamma,\quad 
	\begin{pmatrix}
		g_0 & \\ & e_1
	\end{pmatrix}
	\mapsto
	\begin{pmatrix}
		g_0 & 1^G_{\s(g_0)} & \\ 
		& 0^C_{\s(g_0)} & e_1 \\
		& & 0^E_{\s(g_0)}
	\end{pmatrix},
\end{align*}
etc.

  \def\cprime{$'$} \def\polhk#1{\setbox0=\hbox{#1}{\ooalign{\hidewidth
  \lower1.5ex\hbox{`}\hidewidth\crcr\unhbox0}}} \def\cprime{$'$}
  \def\cprime{$'$}

	\end{document}